\documentclass{article}
\usepackage{amssymb}
\title{{\bf The Complexity of Power Graphs Associated With Finite Groups}}
\author{{\bf S. Kirkland}$^1$, {\bf A. R.  Moghaddamfar}$^{2, 3}$, {\bf S. Navid Salehy}$^{4}$,\\[0.2cm]  {\bf S. Nima Salehy}$^{4}$ and {\bf M. Zohourattar}$^2$\\[0.3cm]
{\em $^1$Department of Mathematics, University of Manitoba,}\\
{\em Winnipeg, MB, Canada.}\\[0.2cm]
{\tt Stephen.Kirkland@umanitoba.ca}\\[0.3cm] 
{\em $^2$Faculty of Mathematics, K. N. Toosi
University of Technology,}\\
{\em P. O. Box $16315$--$1618$, Tehran, Iran,}\\[0.2cm]
{\em $^3$Department of Mathematical Sciences, Kent State
University,}\\ {\em  Kent, Ohio $44242$,  USA}\\[0.2cm]
 {\tt
moghadam@kntu.ac.ir}, and {\tt amoghadd@kent.edu}\\[0.3cm] 
{\em $^4$Department of Mathematics, Florida State University,}\\[0.2cm]
{\em Tallahassee, FL $32306$, USA.}\\[0.1cm]
{\tt navidsalehy@math.fsu.edu}, and 
{\tt
nimasalehy@math.fsu.edu}}

\newenvironment{proof}{\noindent {\em {Proof}}.}{$\square$
\medskip}
\newtheorem{theorem}{Theorem}[section]
\newtheorem{definition}[theorem]{Definition}
\newtheorem{corollary}[theorem]{Corollary}

\newtheorem{remark}[theorem]{Remark}

\newtheorem{lm}[theorem]{Lemma}

\begin{document}
\newcommand{\f}{\frac}
\newcommand{\sta}{\stackrel}
\maketitle
\begin{abstract}
\noindent   The power graph $\mathcal{P}(G)$ of a finite group $G$ is the graph whose vertex set is $G$, and two elements in $G$ are adjacent if one of them is a power of the other.  
The purpose of this paper is twofold. First, we find the complexity of a clique--replaced graph and study  
 some applications. Second, we derive some explicit formulas concerning the complexity $\kappa(\mathcal{P}(G))$ 
for various groups $G$ such as the cyclic group of order $n$, the simple groups $L_2(q)$, the extra--special $p$--groups of order $p^3$, 
 the Frobenius groups, etc. 
\end{abstract}

{\em Keywords}: Power graph, spanning tree, complexity,  group.

\renewcommand{\baselinestretch}{1}
\def\thefootnote{ \ }
\footnotetext{{\em $2010$ Mathematics Subject Classification}: 20D06, 05C05, 05C50.}
\section{Introduction}
All graphs considered here are simple connected graphs.
A {\em spanning tree} of a  connected graph
 is  a subgraph that contains all the vertices and is a tree.
 Counting the number of spanning trees in a connected graph
 is a problem of long--standing interest
 in various fields of science.
 For a  graph $\Gamma$, the 
 number of spanning trees of $\Gamma,$  denoted by $\kappa(\Gamma),$ is known as the {\em complexity} of $\Gamma$.

In this paper, we consider some graphs arising from finite groups. One well--known graph is the power graph, as defined more precisely below.
\begin{definition} {\rm  Let $G$ be a finite group and $X$ a nonempty subset of $G$.
The {\em power graph} $\mathcal{P}(G, X)$,  has
$X$ as its vertex set and two vertices $x$ and $y$ in $X$ are joined by an edge if $\langle
x\rangle\subseteq \langle y\rangle$ or $\langle y\rangle\subseteq
\langle x\rangle$.  }
\end{definition}

The term {\em power graph} was introduced in \cite{Kelarev2}, and after that power graphs have been investigated by many authors, see for instance \cite{Kelarev, Cameron, Moghaddamfar}. 
The investigation of power graphs associated with algebraic structures is important, because these graphs have valuable applications (see the survey article \cite{Kelarev3}) and are related to automata theory (see the book \cite{Kelarev1}).

In the case $X=G$, we will simply write  $\mathcal{P}(G)$ instead of $\mathcal{P}(G, G)$.
Clearly, when $1\in X$,  the power graph is connected, and we can talk about the complexity 
of this graph. For convenience, we put  $\kappa_G(X)=\kappa(\mathcal{P}(G, X))$ and $\kappa(G)=\kappa(\mathcal{P}(G))$. A well known result due to Cayley \cite{Cayley} says that the complexity of the complete graph on $n$ vertices is $n^{n-2}$. In \cite{Chakrabarty}  it was shown that 
a finite group has a complete power graph if and only if it is a cyclic $p$--group, where $p$ is a prime
number.
Thus, as an immediate consequence of Cayley's result, we derive $\kappa({\Bbb Z}_{p^m})=p^{m(p^m-2)}$.  Recently, the authors of \cite{MRSS}   obtained  a formula to compute the complexity $\kappa({\Bbb Z}_{n})$ for any $n$ (see Corollary \ref{result2} below). To obtain Corollary \ref{result2}, we will define a class of graphs more general than the power graphs of cyclic groups. Specifically, we start with a graph $\Gamma$ on  vertices  $v_1, v_2, \ldots, v_n$.  To construct a new graph, we replace each $v_i$  by a complete graph $K_{x_i}$ on $x_i$ vertices,  and if there is an edge between $v_i$ and $v_j$ in $\Gamma$, then  we connect each vertex of $K_{x_i}$  with each vertex of $K_{x_j}$.  The new graph will be denoted by $\Gamma_{[x_1, \ldots, x_n]}$.  We will derive explicit formulas for the complexity $\kappa(\Gamma_{[x_1, \ldots, x_n]})$ (see Theorem \ref{result1} and Remark \ref{alt_appch}). Then we will obtain a formula for the complexity  $\kappa({\Bbb Z}_{n})$  by choosing  a certain graph $\Gamma$  on $k$ vertices and positive integers $x_1, x_2, \ldots, x_k$  (Corollary \ref{result2}).  Finally, the complexities $\kappa(G)$ for certain groups $G$  are presented.

The outline of the paper is as follows. In the next section, we recall 
 some basic definitions and notation and give several auxiliary
results to be used later. The main result
of Section 4 is Theorem \ref{result1} and we include some of its applications. 
In Section 5, we compute  $\kappa(G)$ for certain groups $G$.


\section{Terminology and Previous Results}
We first establish some notation which will be used repeatedly in the sequel.
Given a graph $\Gamma$, we denote by $\mathbf{A}_\Gamma$ and $\mathbf{D}_\Gamma$ the adjacency matrix and the diagonal matrix of vertex degrees of $\Gamma$, respectively. The Laplacian matrix of $G$ is defined
as $\mathbf{L}_\Gamma=\mathbf{D}_\Gamma-\mathbf{A}_\Gamma$. Clearly, $\mathbf{L}_\Gamma$ is a real symmetric matrix and its eigenvalues are nonnegative real numbers.
The Laplacian spectrum of $\Gamma$ is
$${\rm Spec}(\mathbf{L}_\Gamma)=\left(\mu_1(\Gamma), \mu_2(\Gamma), \ldots, \mu_n(\Gamma)\right),$$
where $\mu_1(\Gamma)\geqslant \mu_2(\Gamma)\geqslant \cdots\geqslant \mu_n(\Gamma)$,
are the eigenvalues of $L_\Gamma$ arranged in weakly decreasing order, and $n=|V(\Gamma)|$.
Note that  $\mu_n(\Gamma) = 0$, because each row sum of $\mathbf{L}_\Gamma$ is 0.
Instead of $\mathbf{A}_\Gamma$, $\mathbf{L}_\Gamma$, and $\mu_i(\Gamma)$  we simply write $\mathbf{A}$, $\mathbf{L}$,  and $\mu_i$  if it does not lead to confusion.
Given a subset $\Lambda$ of the vertex set of a graph, we let $\mathbf{A}(\Lambda)$ denote the principal submatrix of $\mathbf{A}$ corresponding to the vertices in $\Lambda$. 

For a graph with $n$ vertices and Laplacian spectrum $\mu_1\geqslant \mu_2\geqslant \cdots\geqslant \mu_n=0$
it has been proved \cite[Corollary 6.5]{Biggs} that:
\begin{equation}\label{eq1}
\kappa(\Gamma)=\frac{\mu_1 \mu_2 \cdots \mu_{n-1}}{n}.
\end{equation}

The vertex--disjoint union of the graphs $\Gamma_1$ and $\Gamma_2$ is denoted by $\Gamma_1\oplus\Gamma_2$.
Define the {\em join} of $\Gamma_1$ and $\Gamma_2$ to be $\Gamma_1 \vee \Gamma_2=(\Gamma_1^c\oplus\Gamma_2^c)^c$. Evidently this is the graph formed from the vertex--disjoint union of the two graphs $\Gamma_1, \Gamma_2$,
by adding edges joining every vertex of  $\Gamma_1$  to every
vertex of $\Gamma_2$.
Now, one may easily prove the following (see also \cite{Merris}). 
\begin{lm}\label{elementary0}
Let $\Gamma_1$ and $\Gamma_2$ be two graphs on disjoint sets  with $m$ and $n$ vertices, respectively. If
$${\rm Spec}(\mathbf{L}_{\Gamma_1})=\left(\mu_1(\Gamma_1), \mu_2(\Gamma_1), \ldots, \mu_m(\Gamma_1)\right),$$
and
$${\rm Spec}(\mathbf{L}_{\Gamma_2})=\left(\mu_1(\Gamma_2), \mu_2(\Gamma_2), \ldots, \mu_n(\Gamma_2)\right),$$ then, the following hold:
\begin{itemize}
\item[{\rm (1)}] the eigenvalues of Laplacian matrix  $\mathbf{L}_{\Gamma_1\oplus \Gamma_2}$ are:
$$\mu_1(\Gamma_1), \  \ldots, \ \mu_{m}(\Gamma_1),  \ \mu_1(\Gamma_2), \  \ldots, \ \mu_{n}(\Gamma_2).$$

\item[{\rm (2)}] the eigenvalues of Laplacian matrix  $\mathbf{L}_{\Gamma_1\vee \Gamma_2}$ are:
$$m+n, \ \mu_1(\Gamma_1)+n, \  \ldots, \  \mu_{m-1}(\Gamma_1)+n, \ \mu_1(\Gamma_2)+m, \ \ldots, \ \mu_{n-1}(\Gamma_2)+m, \ 0.$$
\end{itemize}
\end{lm}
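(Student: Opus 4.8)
The plan is to treat the two parts separately, obtaining (2) from (1) together with the standard relationship between the Laplacian spectra of a graph and its complement.

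For part (1), I would observe that, with respect to the partition of the vertex set into $V(\Gamma_1)$ and $V(\Gamma_2)$, both the degree matrix and the adjacency matrix of $\Gamma_1\oplus\Gamma_2$ are block diagonal, since no edges run between the two vertex sets. Hence $\mathbf{L}_{\Gamma_1\oplus\Gamma_2}$ is itself block diagonal with diagonal blocks $\mathbf{L}_{\Gamma_1}$ and $\mathbf{L}_{\Gamma_2}$, so its spectrum is the multiset union of the two individual spectra. This gives (1) at once.

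For part (2), the key tool is the complement relation. For any graph $H$ on $N$ vertices one has $\mathbf{L}_H+\mathbf{L}_{H^c}=N\mathbf{I}_N-\mathbf{J}_N$, where $\mathbf{J}_N$ is the all--ones matrix; this follows from $\mathbf{A}_H+\mathbf{A}_{H^c}=\mathbf{J}_N-\mathbf{I}_N$ together with $\mathbf{D}_H+\mathbf{D}_{H^c}=(N-1)\mathbf{I}_N$. Since $\mathbf{L}_H\mathbf{1}=0$ forces $\mathbf{J}_N\mathbf{L}_H=\mathbf{L}_H\mathbf{J}_N=0$, the matrices $\mathbf{L}_H$ and $\mathbf{J}_N$ commute and are simultaneously diagonalizable; using the eigenbasis consisting of $\mathbf{1}$ and an orthogonal complement, one reads off that if $0=\mu_N(H)\leqslant\cdots\leqslant\mu_1(H)$ are the eigenvalues of $\mathbf{L}_H$, then those of $\mathbf{L}_{H^c}$ are $0$ together with $N-\mu_i(H)$ for $i=1,\ldots,N-1$.

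With this in hand, I would compute the join spectrum by exploiting the definition $\Gamma_1\vee\Gamma_2=(\Gamma_1^c\oplus\Gamma_2^c)^c$ in three steps. First, apply the complement relation to each factor to record ${\rm Spec}(\mathbf{L}_{\Gamma_1^c})$ as $0$ together with the $m-\mu_i(\Gamma_1)$, and ${\rm Spec}(\mathbf{L}_{\Gamma_2^c})$ as $0$ together with the $n-\mu_j(\Gamma_2)$. Second, invoke part (1) to list the eigenvalues of $\mathbf{L}_{\Gamma_1^c\oplus\Gamma_2^c}$ as the union of these two multisets; this graph on $N=m+n$ vertices carries a kernel of multiplicity two. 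Third, apply the complement relation once more: discarding one guaranteed zero and sending each remaining eigenvalue $\lambda$ to $N-\lambda$, the values $m-\mu_i(\Gamma_1)$ become $n+\mu_i(\Gamma_1)$, the values $n-\mu_j(\Gamma_2)$ become $m+\mu_j(\Gamma_2)$, and the surplus zero becomes $N=m+n$; adjoining the single new zero eigenvalue then produces exactly the list asserted in (2). The only delicate point is the bookkeeping with the zero eigenvalues: one must transform precisely $N-1$ of the $N$ eigenvalues of the middle disjoint union under the final complement, thereby generating the entry $m+n$ from the surplus kernel direction while reintroducing one zero. Once this accounting is carried out, the formula follows directly.
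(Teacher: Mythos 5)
Your proof is correct. The paper itself offers no proof of this lemma---it is stated as an easy fact with a pointer to Merris's paper---so there is nothing to diverge from, and your complement-based argument is exactly the route that the paper's own definition $\Gamma_1\vee\Gamma_2=(\Gamma_1^c\oplus\Gamma_2^c)^c$ invites. The one delicate point, the bookkeeping of the two-dimensional kernel of $\mathbf{L}_{\Gamma_1^c\oplus\Gamma_2^c}$ under the final complementation, is handled correctly: the kernel vector orthogonal to the all-ones vector picks up the eigenvalue $m+n$, while the all-ones vector remains in the kernel and supplies the final $0$.
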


A {\em universal} vertex is a vertex of a graph that is adjacent to all other vertices of the graph. Now, we restrict our attention to information about the set of universal 
vertices of the power graph of a group $G$. As already mentioned, the identity element of $G$ is  a universal vertex in $\mathcal{P}(G)$, and also $\mathcal{P}(G)$ is complete if and only if $G$ is cyclic of prime power order, and in this case $G$ is the set of all universal vertices. However, the following lemma  \cite[Proposition $4$]{Cameron} determines the set of universal vertices of the power graph
of $G$, in the general case.

\begin{lm}\label{universal} 
 Let $G$ be a finite group and $S$  the set of universal vertices of the power graph $\mathcal{P}(G)$. Suppose that $|S|>1$. Then one of the following occurs:
\begin{itemize}
\item[{\rm (a)}]  $G$ is cyclic of prime power order, and $S=G$;
\item[{\rm (b)}]  $G$ is cyclic of non--prime--power order $n$, and $S$ consists of the identity and the 
generators of $G$, so that $|S|=1+\phi(n)$, where $\phi$ is Euler's $\phi$--function;
\item[{\rm (c)}]  $G$ is generalized quaternion, and $S$ contains the identity and the unique involution in $G$, so that $|S|=2$.
\end{itemize}
\end{lm}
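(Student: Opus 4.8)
The plan is to translate the adjacency condition into a statement about inclusions of cyclic subgroups and then to run a case analysis on the prime divisors of $|G|$. Since $\langle 1\rangle\subseteq\langle y\rangle$ for every $y$, the identity is always universal, so $1\in S$; the hypothesis $|S|>1$ therefore supplies a universal vertex $x\neq 1$. By definition $x$ is universal exactly when, for every $y\in G$, the cyclic subgroups $\langle x\rangle$ and $\langle y\rangle$ are comparable under inclusion. First I would record the elementary consequence that if $|x|$ were a prime power $p^k$ while $|G|$ had a second prime divisor $q$, then choosing (via Cauchy's theorem) an element $b$ of order $q$ would produce a $\langle b\rangle$ incomparable with $\langle x\rangle$; hence, as soon as $|G|$ is divisible by at least two primes, any nonidentity universal vertex has order divisible by at least two primes.

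Next I would treat the two cases according to the number of prime divisors of $|G|$. Suppose $|G|$ has at least two distinct prime divisors. Given a universal $x\neq 1$, for any element $y$ of prime--power order the inclusion $\langle x\rangle\subseteq\langle y\rangle$ is impossible (it would force $|x|$ to be a prime power), so $\langle y\rangle\subseteq\langle x\rangle$; thus every element of prime--power order lies in $\langle x\rangle$. Writing an arbitrary $g\in G$ as the product of its commuting prime--power components, each of which lies in $\langle x\rangle$, I conclude $g\in\langle x\rangle$, so $G=\langle x\rangle$ is cyclic of non--prime--power order $n$. It then remains to identify $S$: in a cyclic group $\langle a\rangle\subseteq\langle b\rangle$ iff $\mathrm{ord}(a)\mid\mathrm{ord}(b)$, so $x$ is universal iff $\mathrm{ord}(x)$ is comparable under divisibility with every divisor of $n$; a short divisibility argument shows this happens precisely when $\mathrm{ord}(x)\in\{1,n\}$, i.e. $x=1$ or $x$ is a generator. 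This yields $|S|=1+\phi(n)$ and gives case (b).

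Now suppose $G$ is a $p$--group. Here I would show that $G$ has a unique subgroup of order $p$. Fixing a universal $x\neq1$ and any element $a$ of order $p$, comparability forces either $a\in\langle x\rangle$ or $\langle x\rangle=\langle a\rangle$; a second order--$p$ subgroup $\langle b\rangle\neq\langle a\rangle$ is quickly ruled out in either situation (using that a cyclic group has a unique subgroup of order $p$, respectively that $\langle a\rangle$ and $\langle b\rangle$ would then be forced comparable). With the unique subgroup of order $p$ in hand, I invoke the classical classification of finite $p$--groups possessing a single subgroup of order $p$: such a group is cyclic, or $p=2$ and it is generalized quaternion. The cyclic alternative is case (a), the power graph being complete so that $S=G$. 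In the generalized quaternion case I would finish by showing $S=\{1,z\}$, where $z$ is the unique involution: $z$ lies in every nontrivial subgroup and so is universal, while any other universal $x$ must, by maximality and incomparability of the several maximal cyclic subgroups of $Q_{2^n}$, be contained in each of them, hence in their intersection, which is the central subgroup $\{1,z\}$. This gives $|S|=2$ and case (c).

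The routine parts are the comparability bookkeeping and the divisibility argument in the cyclic case; the one genuinely external ingredient, and the step I expect to be the crux, is the classical theorem that a finite $p$--group with a unique subgroup of order $p$ is cyclic or generalized quaternion, together with the structural facts about $Q_{2^n}$ (unique involution, and intersection of maximal cyclic subgroups equal to the center) needed to pin down $S$ in case (c).
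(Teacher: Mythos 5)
The paper offers no proof of this lemma to compare against: it is imported wholesale as \cite[Proposition 4]{Cameron}, with the citation serving as the entire justification. Your argument is correct and essentially reconstructs the standard proof from Cameron's paper, so you have filled in what the authors chose to outsource. The skeleton is sound: reducing universality to comparability of cyclic subgroups; showing via Cauchy that when $|G|$ has two distinct prime divisors a nonidentity universal vertex cannot have prime-power order, and then using the decomposition of an arbitrary $g$ into commuting prime-power components to force $G=\langle x\rangle$; and, in the $p$-group case, extracting a unique subgroup of order $p$ and invoking the Burnside classification (unique subgroup of order $p$ implies cyclic or, for $p=2$, generalized quaternion). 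The two steps you leave compressed both go through: for case (b), if $m\mid n$, $1<m<n$, and $n$ is not a prime power, one can always produce a divisor of $n$ incomparable with $m$ under divisibility (take the full $p$-part $p^a$ of $n$ for a prime $p$ with $p^a\nmid m$ if some other prime divides $m$, and otherwise the full $q$-part for a second prime $q$ of $n$); and for case (c), distinct maximal cyclic subgroups of $Q_{2^n}$ are indeed pairwise incomparable with intersection $\{1,z\}$, so every universal vertex lands in $\{1,z\}$, while $z$ is universal because every nontrivial subgroup of $Q_{2^n}$ contains the unique involution. You correctly flag the classification of $p$-groups with a unique minimal subgroup as the one genuinely external ingredient; that is exactly the ingredient Cameron's own proof rests on, so citing it is the right move. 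In short: the paper buys brevity by citation, your route buys self-containedness at the cost of one classical theorem.
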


We conclude this section with notation and definitions to be used in 
the paper. All the groups considered here are finite.  We denote by $[G,G]$ the commutator subgroup, for any group $G$. If $g \in G$, then $o(g)$ denotes the order of the element $g$. We refer to any  element in $G$  of order $2$, as an {\em involution}. 
An {\em elementary abelian $p$--group} of order $p^n$, denoted by  ${\Bbb E}_{p^n}$, is isomorphic to a direct 
product of $n$ copies of the cyclic group ${\Bbb Z}_p$. The complement of a graph $\Gamma$ is denoted by $\Gamma^c$. The neighborhood of a vertex $v$ in the graph $\Gamma$ is denoted by $N_\Gamma(v)$. Let $K_n$ denote the {\em complete graph} ({\em clique}) with $n$ vertices.
 Throughout we use the standard notation and terminology introduced in \cite{Biggs, Isaacs} for graph theory and group theory.


\section{Auxiliary Results}
\begin{lm}\label{join-integer}  Let $\Gamma$ be any graph on $n$ vertices with Laplacian spectrum
$$\mu_1 \geqslant \mu_2 \geqslant  \cdots \geqslant  \mu_n.$$
If $m$ is an integer, then the following product
$$(\mu_1+m)(\mu_2+m)\cdots (\mu_{n-1}+m),$$
is also an integer.
\end{lm}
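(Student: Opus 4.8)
The plan is to exploit the fact that the Laplacian matrix $\mathbf{L}$ of $\Gamma$ has integer entries, so that information about its eigenvalues can be extracted from its characteristic polynomial without ever computing the eigenvalues themselves. First I would form the characteristic polynomial $p(x) = \det(x\mathbf{I} - \mathbf{L})$. Since $\mathbf{L} = \mathbf{D} - \mathbf{A}$, where both the degree matrix $\mathbf{D}$ and the adjacency matrix $\mathbf{A}$ have integer entries, $\mathbf{L}$ is an integer matrix, and hence $p(x)$ is a monic polynomial with integer coefficients. Its roots are exactly the Laplacian eigenvalues, so we may write $p(x) = \prod_{i=1}^{n}(x - \mu_i)$.

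The key step is to remove the zero eigenvalue. As noted in Section 2, each row sum of $\mathbf{L}$ is zero, so $\mu_n = 0$ and therefore $p(0) = 0$; equivalently, $x$ divides $p(x)$. Writing $p(x) = a_n x^n + \cdots + a_1 x + a_0$ with each $a_i \in \mathbb{Z}$ and $a_0 = 0$, the quotient $q(x) := p(x)/x = a_n x^{n-1} + \cdots + a_1$ again has integer coefficients. On the other hand, dividing the factorization of $p$ by the factor $x = x - \mu_n$ gives $q(x) = \prod_{i=1}^{n-1}(x - \mu_i)$.

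Finally I would evaluate $q$ at the integer $x = -m$. On the one hand $q(-m) \in \mathbb{Z}$, because $q$ has integer coefficients and $-m$ is an integer. On the other hand, $q(-m) = \prod_{i=1}^{n-1}(-m - \mu_i) = (-1)^{n-1}\prod_{i=1}^{n-1}(\mu_i + m)$. Combining these two expressions shows that $\prod_{i=1}^{n-1}(\mu_i + m) = (-1)^{n-1} q(-m)$ is an integer, as desired. There is no serious obstacle here; the only point requiring care is the reduction from $p$ to $q$, which uses in an essential way that $0$ is a root of $p$, guaranteeing that dividing out the single factor $x$ leaves the coefficients integral. Without the eigenvalue $\mu_n = 0$ one could not strip off exactly one factor and retain a polynomial with integer coefficients, which is precisely why the product runs only up to $\mu_{n-1}$.
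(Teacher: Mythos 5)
Your proposal is correct and follows essentially the same route as the paper: both arguments pass through the characteristic polynomial of the integer matrix $\mathbf{L}$, use the integrality of its coefficients together with the zero eigenvalue $\mu_n=0$, and evaluate at $-m$. The only (minor, beneficial) difference is that you divide out the factor $x$ at the polynomial level before evaluating, whereas the paper evaluates $\sigma(\Gamma;-m)$ first and then divides by the integer $m$, a step that tacitly assumes $m\neq 0$; your version avoids even that degenerate case.
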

\begin{proof}  Consider the characteristic polynomial of the Laplacian matrix $\mathbf{L}$:
$$\sigma(\Gamma; \mu)=\det (\mu \mathbf{I}-\mathbf{L})=\mu^n+c_1\mu^{n-1}+\cdots+c_{n-1}\mu+c_n.$$
First, we observe that the coefficients $c_i$ are integers \cite[Theorem 7.5]{Biggs}, and in particular, $c_n=0$.
This forces  $\sigma(\Gamma; -m)$ to be an integer, which is  divisible by $m$.
Moreover, we have
$$\sigma(\Gamma; \mu)=(\mu-\mu_1)(\mu-\mu_2)\cdots (\mu-\mu_n),$$
and since $\mu_n=0$, we obtain
$$\sigma(\Gamma; -m)=(-1)^n m(\mu_1+m)(\mu_2+m)\cdots (\mu_{n-1}+m).$$
 The result now follows.
\end{proof}

\begin{lm}\label{full}
Let  a graph $\Gamma$ with $n$ vertices contain $m<n$ universal vertices. Then $k(\Gamma)$ is divisible
by $n^{m-1}$.
\end{lm}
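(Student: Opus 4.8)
The plan is to exploit the structure of a graph with universal vertices and connect it to the complexity formula \eqref{eq1}. The key observation is that if $\Gamma$ has $m$ universal vertices, then $\Gamma$ can be written as a join $\Gamma = K_m \vee \Gamma'$, where $K_m$ is the complete graph on the $m$ universal vertices and $\Gamma'$ is the induced subgraph on the remaining $n-m$ vertices. This is because a universal vertex is adjacent to everything, so the $m$ universal vertices form a clique and each is joined to every vertex of $\Gamma'$; conversely, in a join $K_m \vee \Gamma'$ every vertex of the $K_m$ part is universal.

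Given this decomposition, I would apply Lemma \ref{elementary0}(2) to read off the Laplacian spectrum of $\Gamma = K_m \vee \Gamma'$. Recall that the complete graph $K_m$ has Laplacian spectrum consisting of $m$ with multiplicity $m-1$ and $0$ once. Writing the spectrum of $\Gamma'$ (on $n-m$ vertices) as $\mu_1(\Gamma') \geqslant \cdots \geqslant \mu_{n-m}(\Gamma')$, Lemma \ref{elementary0}(2) tells us that the Laplacian eigenvalues of $\Gamma$ are: the value $n$ (once), the values $\mu_i(K_m) + (n-m)$ for $i = 1, \ldots, m-1$, the values $\mu_j(\Gamma') + m$ for $j = 1, \ldots, n-m-1$, and finally $0$. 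Since the nonzero eigenvalues of $K_m$ all equal $m$, the first batch contributes $m-1$ copies of $m + (n-m) = n$. Thus $\Gamma$ has the eigenvalue $n$ with multiplicity at least $m$ (one from the join itself, and $m-1$ from the $K_m$ factor).

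Now I would plug this into \eqref{eq1}. The complexity is $\kappa(\Gamma) = \frac{1}{n}\prod_{i=1}^{n-1}\mu_i(\Gamma)$, the product of all nonzero Laplacian eigenvalues divided by $n$. Among these $n-1$ nonzero eigenvalues, we have identified $m$ copies of the value $n$ (the single $n$ from the join plus the $m-1$ from $K_m$). Dividing by the $n$ in the denominator of \eqref{eq1} consumes one factor of $n$, leaving a factor $n^{m-1}$ times the product of the remaining eigenvalues, namely $\prod_{j=1}^{n-m-1}(\mu_j(\Gamma') + m)$. Therefore
\[
\kappa(\Gamma) = n^{m-1}\prod_{j=1}^{n-m-1}\bigl(\mu_j(\Gamma') + m\bigr).
\]
It remains to verify that the residual product is an integer, which is exactly the content of Lemma \ref{join-integer} applied to the graph $\Gamma'$ with shift $m$: that lemma guarantees $\prod_{j=1}^{n-m-1}(\mu_j(\Gamma')+m)$ is an integer. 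Since $\kappa(\Gamma)$ is itself a positive integer and equals $n^{m-1}$ times this integer, we conclude $n^{m-1} \mid \kappa(\Gamma)$, as desired.

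The main obstacle is really just the careful bookkeeping of eigenvalue multiplicities in the join formula — in particular, recognizing that the $m-1$ eigenvalues $m$ of $K_m$ get shifted by $n-m$ to become exactly $n$, matching the single $n$ coming from the join, so that the total multiplicity of the eigenvalue $n$ is $m$. Once that count is correct, the divisibility follows immediately from \eqref{eq1} together with the integrality supplied by Lemma \ref{join-integer}. The hypothesis $m < n$ is used to ensure $\Gamma'$ is nonempty so that the decomposition and the lemma apply meaningfully.
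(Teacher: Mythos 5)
Your proof is correct and follows essentially the same route as the paper's own argument: write $\Gamma = K_m \vee \Gamma'$ using the universal vertices, read off the Laplacian spectrum via Lemma \ref{elementary0} to get $\kappa(\Gamma) = n^{m-1}\prod_{j}(\mu_j(\Gamma')+m)$ from Eq.~(\ref{eq1}), and invoke Lemma \ref{join-integer} for the integrality of the residual factor. No gaps; the eigenvalue bookkeeping (the $m-1$ shifted eigenvalues of $K_m$ plus the single join eigenvalue giving multiplicity $m$ for the value $n$) matches the paper exactly.
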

\begin{proof}  Let $W$ be the set of universal vertices, $\Gamma_0=\Gamma-W$ and $t=n-m$. 
Clearly, we have $\Gamma=K_m\vee \Gamma_0$. Let 
$\mu_1 \geqslant \mu_2 \geqslant  \cdots \geqslant  \mu_t=0$, be the eigenvalues of $\mathbf{L}_{\Gamma_0}$.  
Since the Laplacian matrix for the complete graph $K_m$ has eigenvalue $0$ with multiplicity $1$ and eigenvalue $m$ with multiplicity $m-1$,  it follows by Lemma \ref{elementary0}  that  the eigenvalues of the Laplacian matrix  $\mathbf{L}_{\Gamma}$ are:
$$n, \underbrace{n, \ n, \ \ldots, \ n}_{m-1}, \  \underbrace{\mu_1+m, \ \mu_2+m, \ \ldots, \ \mu_{t-1}+m}_{t-1}, \ 0. $$
We find immediately using Eq. (\ref{eq1}) that
$$\kappa(\Gamma)=n^{m-1}(\mu_1+m)(\mu_2+m)\cdots (\mu_{t-1}+m).$$
Finally, since $(\mu_1+m)(\mu_2+m)\cdots (\mu_{t-1}+m)$ is an integer by Lemma \ref{join-integer},  we obtain the result. \end{proof}

Let $Q_{2^n}$ $(n\geqslant 3)$ denote the generalized quaternion
group of order $2^n$, which can be presented by
$$Q_{2^n}=\langle x, y \ | \ x^{2^{n-1}}=1, y^2=x^{2^{n-2}}, x^y=x^{-1}\rangle.$$
Moreover, the power graph $\mathcal{P}(Q_{2^n})$ has the following form:
$$\mathcal{P}(Q_{2^n})=K_2\vee \Big(K_{2^{n-1}-2}\oplus
\underbrace{K_2\oplus K_2\oplus \cdots \oplus K_2}_{2^{n-2} {\rm
-times}}\Big).$$ 
Using Lemma \ref{elementary0} and Eq. (\ref{eq1}), we have the following corollary \cite[Theorem 5.2]{MRSS}:
\begin{corollary}\label{cor-quater} Let $n\geqslant 3$ be an integer. Then,  
$\kappa(Q_{2^n})=2^{(2^{n-2}-1)(2n+1)+4}$.
\end{corollary}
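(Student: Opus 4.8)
The plan is to read off the Laplacian spectrum of the inner graph $\Gamma_0 = K_{2^{n-1}-2} \oplus (K_2 \oplus \cdots \oplus K_2)$ (with $2^{n-2}$ copies of $K_2$) from its component spectra, and then feed this into the complexity formula for a join with universal vertices. First I would recall that the Laplacian spectrum of a complete graph $K_s$ consists of the eigenvalue $s$ with multiplicity $s-1$ together with a single $0$. Applying Lemma \ref{elementary0}(1) to the disjoint union $\Gamma_0$ then yields the spectrum of $\mathbf{L}_{\Gamma_0}$: the value $2^{n-1}-2$ with multiplicity $2^{n-1}-3$ (contributed by $K_{2^{n-1}-2}$), the value $2$ with multiplicity $2^{n-2}$ (one from each $K_2$), and the value $0$ with multiplicity $1 + 2^{n-2}$ (one from each of the $1 + 2^{n-2}$ connected components). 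A quick check confirms that these multiplicities sum to $|V(\Gamma_0)| = 2^n - 2$.

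Next, since $\mathcal{P}(Q_{2^n}) = K_2 \vee \Gamma_0$, the two vertices of the $K_2$ are universal, so by Lemma \ref{universal}(c) there are exactly $m = 2$ universal vertices in a graph on $N = 2^n$ vertices (here $N$ plays the role of ``$n$'' in Lemma \ref{full}). I would therefore invoke Lemma \ref{full} (equivalently, combine Lemma \ref{elementary0}(2) with Eq. (\ref{eq1})), obtaining
$$\kappa(Q_{2^n}) = N^{m-1}\prod_{i=1}^{t-1}(\mu_i + m) = 2^n \prod_{i=1}^{t-1}(\mu_i + 2),$$
where $t = N - m = 2^n - 2$ and $\mu_1 \geqslant \cdots \geqslant \mu_t = 0$ are the eigenvalues of $\mathbf{L}_{\Gamma_0}$ listed above. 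The one subtlety to watch is that the product runs over $\mu_1, \ldots, \mu_{t-1}$, i.e. it omits a single zero eigenvalue of the disconnected graph $\Gamma_0$; of the $1 + 2^{n-2}$ zeros, exactly $2^{n-2}$ therefore survive, each contributing a factor $(0 + 2)$.

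Collecting the powers of $2$, the surviving eigenvalues give
$$\prod_{i=1}^{t-1}(\mu_i + 2) = \big(2^{n-1}\big)^{2^{n-1}-3}\cdot 4^{2^{n-2}}\cdot 2^{2^{n-2}} = 2^{(n-1)(2^{n-1}-3) + 2^{n-1} + 2^{n-2}},$$
so that $\kappa(Q_{2^n}) = 2^{E}$ with $E = n + (n-1)(2^{n-1}-3) + 2^{n-1} + 2^{n-2}$. The final step is the purely algebraic verification that
$$E = n\cdot 2^{n-1} + 2^{n-2} - 2n + 3 = (2^{n-2}-1)(2n+1) + 4,$$
which is the claimed exponent.

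I expect no genuine obstacle in this argument, as it is essentially a spectral bookkeeping computation. The only two places where care is needed are tracking the multiplicity of the zero eigenvalue of the disconnected graph $\Gamma_0$ correctly when forming the product in Eq. (\ref{eq1}), so that precisely one zero is discarded, and carrying out the exponent arithmetic at the end without error.
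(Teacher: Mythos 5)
Your proposal is correct, and it follows exactly the route the paper intends: it starts from the decomposition $\mathcal{P}(Q_{2^n})=K_2\vee\bigl(K_{2^{n-1}-2}\oplus 2^{n-2}K_2\bigr)$, reads off the Laplacian spectrum via Lemma \ref{elementary0}, and divides the product of nonzero eigenvalues by $2^n$ as in Eq. (\ref{eq1}); the paper simply leaves these spectral bookkeeping details (which you carry out correctly, including the exponent identity) to the reader.
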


A  finite group $G$ is called an {\em element prime order} group (EPO--group) if every nonidentity element of $G$ has prime order. We can consider the power graph of an EPO--group $G$ as follows: 
$$ \mathcal{P}(G) =K_1 \vee \left(\bigoplus_{p\in \pi(G)} c_pK_{p-1}\right),$$
where $c_p$ signifies the number of cyclic subgroups of order $p$
in $G$. Again, using Lemma \ref{elementary0} and Eq. (\ref{eq1}), we have the following corollary \cite[Corollary 3.4]{MRSS}:
\begin{corollary}\label{cor-epo} Let $G$ be an EPO--group. Then we have:
$$\kappa(G)=\prod_{p\in \pi(G)}p^{(p-2)c_p}.$$
In particular,  we have 
$$\kappa\left({\Bbb E}_{p^n}\right)=p^{(p-2)(p^n-1)/(p-1)}.$$
\end{corollary}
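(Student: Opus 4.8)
The plan is to read off the Laplacian spectrum of $\mathcal{P}(G)$ directly from the decomposition $\mathcal{P}(G) = K_1 \vee \big(\bigoplus_{p \in \pi(G)} c_p K_{p-1}\big)$ recorded above, and then to substitute into Eq.~(\ref{eq1}). First I would note the Laplacian spectrum of a single clique: $\mathbf{L}_{K_{p-1}}$ has eigenvalue $0$ with multiplicity $1$ and eigenvalue $p-1$ with multiplicity $p-2$.

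Next, applying Lemma~\ref{elementary0}(1) to the vertex--disjoint union $\Gamma_0 := \bigoplus_{p} c_p K_{p-1}$, the spectrum of $\mathbf{L}_{\Gamma_0}$ is the multiset union of the individual clique spectra: the eigenvalue $0$ occurs with multiplicity $\sum_{p} c_p$ (one per component), while each prime $p$ contributes the eigenvalue $p-1$ with multiplicity $(p-2)c_p$. Then, writing $\mathcal{P}(G) = K_1 \vee \Gamma_0$ and applying Lemma~\ref{elementary0}(2) with the single--vertex graph $K_1$ (whose only Laplacian eigenvalue is $0$), I would obtain the spectrum of $\mathbf{L}_{\mathcal{P}(G)}$: it consists of $n := |G|$ once, the value $0$ once, and the remaining $n-2$ eigenvalues obtained by discarding one copy of the eigenvalue $0$ from $\Gamma_0$ and adding $1$ to each survivor. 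Concretely this yields the eigenvalue $1$ with multiplicity $\big(\sum_p c_p\big)-1$ and the eigenvalue $p$ with multiplicity $(p-2)c_p$ for each $p \in \pi(G)$, together with the single eigenvalues $n$ and $0$. A quick count confirms consistency, since $\sum_{p}(p-1)c_p = n-1$.

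Finally I would plug into Eq.~(\ref{eq1}). The product $\mu_1 \cdots \mu_{n-1}$ of the nonzero eigenvalues equals $n \cdot 1^{(\sum_p c_p)-1} \cdot \prod_p p^{(p-2)c_p}$; dividing by $n$ cancels the leading factor and the powers of $1$ disappear, leaving $\kappa(G) = \prod_{p \in \pi(G)} p^{(p-2)c_p}$, as claimed. For the special case $G = {\Bbb E}_{p^n}$, which is an EPO--group with $\pi(G) = \{p\}$, it remains only to count the cyclic subgroups of order $p$: since all $p^n-1$ nonidentity elements have order $p$ and each such subgroup supplies exactly $p-1$ of them, one gets $c_p = (p^n-1)/(p-1)$, and substitution gives the stated formula.

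I do not anticipate a genuine obstacle here; the computation is essentially bookkeeping once the spectral lemmas are in hand. The only points requiring a little care are remembering to delete exactly one zero eigenvalue when passing from $\Gamma_0$ to the join (so that the right shift by $1$ is applied to the correct multiset), and checking that the factor $n$ introduced by the join cancels cleanly against the denominator of Eq.~(\ref{eq1}).
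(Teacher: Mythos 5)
Your proposal is correct and follows exactly the route the paper indicates: it reads off the Laplacian spectrum of $\mathcal{P}(G)=K_1\vee\bigl(\bigoplus_{p\in\pi(G)}c_pK_{p-1}\bigr)$ via Lemma \ref{elementary0} and then substitutes into Eq.~(\ref{eq1}), which is precisely the argument the paper invokes (citing \cite{MRSS}). Your spectral bookkeeping (eigenvalue $n$ once, $p$ with multiplicity $(p-2)c_p$, $1$ with multiplicity $\sum_p c_p - 1$, and $0$ once) and the count $c_p=(p^n-1)/(p-1)$ for ${\Bbb E}_{p^n}$ are both accurate.
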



\section{Clique--Replaced Graphs}
Let $\Gamma$ be a connected graph with vertices $v_1, \ldots, v_k$. Given positive integers $x_1, \ldots, x_k$,
we construct the new graph $\Gamma_{[x_1, \ldots, x_k]}$ as follows: Replace vertex $v_i$ 
in $\Gamma$ by the complete graph (clique) $K_{x_i}$, $i=1, \ldots, k$, and label the vertex set of $K_{x_i}$ for each $i$ as:
$u_{i_1}, u_{i_2}, \ldots, u_{i_{x_i}}$. Now, if $v_i$ is adjacent to $v_j$ in $\Gamma$, then connect all vertices 
$u_{i_1}, u_{i_2}, \ldots, u_{i_{x_i}}$ with all vertices $u_{j_1}, u_{j_2}, \ldots, u_{j_{x_j}}$. 
We call the resulting graph $\Gamma_{[x_1, \ldots, x_k]}$ the {\em clique--replaced graph}.
It is clear that for a fixed $i$, all vertices $u_{i_1}, u_{i_2}, \ldots, u_{i_{x_i}}$ have the same degree which is equal to 
$$n_i=x_i-1+\sum_{v_j\in N_\Gamma(v_i)} x_j.$$
Put 
$m_i=n_i+1=x_i+\sum_{v_j\in N_\Gamma(v_i)} x_j, \ \ \ \lambda_i=\frac{m_i}{x_i}, \ \ i=1, \ldots, k,$
and $\Psi=\prod_{i=1}^{k} \lambda_i.$ 
Suppose that $n=x_1+\cdots+x_k$. 

\begin{theorem}\label{result1}  With the notation as explained above, we have 
\begin{equation}\label{formula1} \kappa\left(\Gamma_{[x_1, \ldots, x_k]}\right)=\prod_{i=1}^{k}m_i^{x_i}\Big(\Psi+
\sum_{\Lambda} \det \mathbf{A}_{\Gamma^c}(\Lambda)\lambda_1^{t_1}
\lambda_2^{t_2}\cdots \lambda_{k}^{t_{k}}\Big)/(\Psi
n^2),\end{equation} where $t_i\in \{0, 1\}$, $i=1,  \ldots,
k$, and the summation is over all induced subgraphs $\Lambda$ of
$\Gamma^c$ whose vertex set $\{v_{i_1}, \ldots , v_{i_s} \}$ corresponds to 
  $\{i_j| t_{i_j}=0 \}.$  
\end{theorem}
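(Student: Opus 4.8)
The plan is to compute the Laplacian spectrum of $\Gamma_{[x_1,\ldots,x_k]}$ explicitly and then feed it into Eq.~(\ref{eq1}). First I would order the vertices block by block, so that $\mathbf{L}$ acquires a $k\times k$ block form: the $(i,i)$ block equals $m_i\mathbf{I}_{x_i}-\mathbf{J}_{x_i}$ (where $\mathbf{J}$ denotes an all--ones matrix), while for $i\neq j$ the $(i,j)$ block is $-\mathbf{J}_{x_i\times x_j}$ when $v_i$ is adjacent to $v_j$ in $\Gamma$ and is $\mathbf{0}$ otherwise. Here I use that every vertex of the $i$--th clique has degree $n_i=m_i-1$, so its diagonal entry of $\mathbf{D}$ is $m_i-1$ and adding back the diagonal $-1$ from $\mathbf{J}_{x_i}-\mathbf{I}_{x_i}$ produces the factor $m_i\mathbf{I}_{x_i}$.

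Next I would decompose $\mathbb{R}^n$ into two $\mathbf{L}$--invariant subspaces. For each $i$, any vector supported on the $i$--th block and summing to zero there is annihilated by every off--diagonal block (each is a multiple of a rank--one all--ones matrix) and is an eigenvector of $m_i\mathbf{I}_{x_i}-\mathbf{J}_{x_i}$ with eigenvalue $m_i$; this produces the eigenvalue $m_i$ with multiplicity $x_i-1$, accounting for $n-k$ eigenvalues. The complementary space of vectors constant on each block is also invariant, and there $\mathbf{L}$ acts as the $k\times k$ quotient matrix $\mathbf{B}=\mathrm{diag}(\mathbf{A}_\Gamma\mathbf{x})-\mathbf{A}_\Gamma\mathbf{X}$, where $\mathbf{x}=(x_1,\ldots,x_k)^{\top}$ and $\mathbf{X}=\mathrm{diag}(x_1,\ldots,x_k)$. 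One checks $\mathbf{B}\mathbf{1}=\mathbf{0}$, so $0$ is an eigenvalue; connectedness of $\Gamma$ forces it to be simple and to be the unique zero eigenvalue of $\mathbf{L}$. Hence Eq.~(\ref{eq1}) gives $\kappa=\frac{1}{n}\big(\prod_{i}m_i^{x_i-1}\big)\pi$, where $\pi$ is the product of the $k-1$ nonzero eigenvalues of $\mathbf{B}$.

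The heart of the argument is the evaluation of $\pi$, and this is the step I expect to be the main obstacle, since $\mathbf{B}$ is not symmetric. I would conjugate by $\mathbf{X}^{1/2}$ to obtain the symmetric matrix $\widetilde{\mathbf{B}}=\mathbf{X}^{1/2}\mathbf{B}\mathbf{X}^{-1/2}=\mathrm{diag}(m_i-x_i)-\mathbf{X}^{1/2}\mathbf{A}_\Gamma\mathbf{X}^{1/2}$, which has the same spectrum and null vector $\mathbf{w}=\mathbf{X}^{1/2}\mathbf{1}$ with $\|\mathbf{w}\|^2=n$. For a symmetric matrix with one--dimensional kernel spanned by $\mathbf{w}$, a rank--one update shows that the product of the nonzero eigenvalues equals $\frac{1}{n}\det\big(\widetilde{\mathbf{B}}+\mathbf{w}\mathbf{w}^{\top}\big)$. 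Since $\mathbf{w}\mathbf{w}^{\top}=\mathbf{X}^{1/2}\mathbf{J}\mathbf{X}^{1/2}$ and $\mathbf{J}-\mathbf{A}_\Gamma=\mathbf{A}_{\Gamma^c}+\mathbf{I}$, a short simplification collapses the diagonal part to give $\widetilde{\mathbf{B}}+\mathbf{w}\mathbf{w}^{\top}=\mathrm{diag}(m_i)+\mathbf{X}^{1/2}\mathbf{A}_{\Gamma^c}\mathbf{X}^{1/2}=\mathbf{X}^{1/2}\big(\mathrm{diag}(\lambda_i)+\mathbf{A}_{\Gamma^c}\big)\mathbf{X}^{1/2}$. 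Taking determinants and using $\det\mathbf{X}=\prod_i x_i$ then yields $\pi=\frac{\prod_i x_i}{n}\det\big(\mathrm{diag}(\lambda_i)+\mathbf{A}_{\Gamma^c}\big)$.

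Finally I would expand the diagonal--plus--matrix determinant by multilinearity in the columns, obtaining $\det\big(\mathrm{diag}(\lambda_i)+\mathbf{A}_{\Gamma^c}\big)=\sum_{T\subseteq\{1,\ldots,k\}}\det\mathbf{A}_{\Gamma^c}(T)\prod_{i\notin T}\lambda_i$, where $T=\varnothing$ contributes the empty--product term $\Psi=\prod_i\lambda_i$. Identifying each $T$ with the vertex set $\{i:t_i=0\}$ of an induced subgraph $\Lambda$ of $\Gamma^c$ rewrites the nonempty terms as $\sum_{\Lambda}\det\mathbf{A}_{\Gamma^c}(\Lambda)\lambda_1^{t_1}\cdots\lambda_k^{t_k}$. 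Substituting everything back into $\kappa=\frac{1}{n}\big(\prod_i m_i^{x_i-1}\big)\pi$ and using $\Psi=\prod_i m_i/\prod_i x_i$ to turn $\big(\prod_i m_i^{x_i-1}\big)\prod_i x_i$ into $\prod_i m_i^{x_i}/\Psi$ produces exactly (\ref{formula1}). The remaining work beyond the spectral computation of paragraph three is purely bookkeeping to match the stated form.
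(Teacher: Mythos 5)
Your proposal is correct, and it takes a genuinely different route from the paper's own proof of Theorem \ref{result1}. The paper never touches the spectrum directly: it invokes the identity $\kappa(\Gamma^\ast)=\det(\mathbf{J}+\mathbf{L}_{\Gamma^\ast})/n^2$ and performs explicit row and column operations on the $n\times n$ block matrix $\mathbf{J}+\mathbf{L}_{\Gamma^\ast}$ to extract the factor $\prod_i m_i^{x_i}$ and reduce everything to the $k\times k$ determinant $\det\bigl(\mathrm{diag}(\lambda_1,\ldots,\lambda_k)+\mathbf{A}_{\Gamma^c}\bigr)$, which it then expands over principal minors exactly as you do in your final paragraph. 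You instead compute the Laplacian spectrum: the sum-zero vectors on each clique give the eigenvalue $m_i$ with multiplicity $x_i-1$, the block-constant vectors carry the equitable-partition quotient matrix $\mathbf{B}=\mathrm{diag}(\mathbf{A}_\Gamma\mathbf{x})-\mathbf{A}_\Gamma\mathbf{X}$, and you evaluate the product of its nonzero eigenvalues by the symmetrization $\mathbf{X}^{1/2}\mathbf{B}\mathbf{X}^{-1/2}$ plus a rank-one update; I checked the key collapse $\widetilde{\mathbf{B}}+\mathbf{w}\mathbf{w}^{\top}=\mathbf{X}^{1/2}\bigl(\mathrm{diag}(\lambda_i)+\mathbf{A}_{\Gamma^c}\bigr)\mathbf{X}^{1/2}$ and the bookkeeping $\bigl(\prod_i m_i^{x_i-1}\bigr)\prod_i x_i=\prod_i m_i^{x_i}/\Psi$, and both are right, so the two arguments meet at the same $k\times k$ determinant. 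Your route is actually closest in spirit to the paper's Remark \ref{alt_appch}, which uses the same two invariant subspaces, but that remark finishes by computing $\alpha_2\cdots\alpha_k$ via Chaiken's all-minors matrix tree theorem (sums of weighted arborescences) rather than via your rank-one-update determinant. Incidentally, your $\mathbf{B}$ (off-diagonal entry $-x_q$ for every adjacent pair $p\sim q$) is the Laplacian of the weighted digraph $W$ described in that remark, whereas the entrywise definition of $\mathbf{S}$ printed there (off-diagonal entry $-x_{\max(p,q)}$, a symmetric matrix) is inconsistent with $W$ and gives wrong complexities already for $K_3=P_{2[1,2]}$; your derivation implicitly corrects that misprint. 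As for what each approach buys: the paper's is elementary determinant manipulation requiring no spectral input beyond the $\det(\mathbf{J}+\mathbf{L})$ identity, while yours costs the rank-one-update lemma but yields the full Laplacian spectrum of $\Gamma_{[x_1,\ldots,x_k]}$ as a byproduct and unifies Theorem \ref{result1} with Remark \ref{alt_appch}.
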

\begin{proof}$^{1}$\footnote{$^{1}$The idea of this proof is borrowed from \cite[Theorem 4.1]{MRSS}.}  Let $\Gamma^\ast=\Gamma_{[x_1, \ldots, x_k]}$.
 It is easy to check that, the matrix
$\mathbf{J}+\mathbf{L}_{\Gamma^\ast}$ associated with $\Gamma^\ast$ has the following block--matrix structure:
\begin{equation}\label{e1} \mathbf{J}+\mathbf{L}_{\Gamma^\ast}= \left(\mathbf{D}_{ij}\right)_{1\leqslant i,j\leqslant k},
\end{equation} where $\mathbf{D}_{ij}$ is a matrix of size $x_i\times x_j$
with
$$D_{ij}=\left\{\begin{array}{lll} m_i\mathbf{I} & if & i=j,\\[0.2cm]
0 & if & i\neq j, \ \  v_i\sim v_j \ \mbox{in} \ \Gamma,\\[0.2cm]
\mathbf{J} &  & \mbox{otherwise}.
\end{array} \right.$$
We need only to evaluate $\det (\mathbf{J}+\mathbf{L}_{\Gamma^\ast})$, because $\kappa(\Gamma^\ast)=\det(\mathbf{J}+\mathbf{L}_{\Gamma^\ast})/n^2$.
In what follows, $D$ denotes the determinant of the matrix on the right--hand
side of Eq. (\ref{e1}). In order to compute this determinant, we
apply the following row and column operations:
We subtract column $j$ from column $j+r$:
$$\left\{\begin{array}{ll} j=1+\sum\limits_{l=1}^{h}x_l, \ h=0, 1,2, \ldots, k-1,\\[0.3cm]
r=1, 2, \ldots, x_{h+1}-1.\end{array} \right.$$
Then, we add row $i+s$ to row $i$:
$$\left\{\begin{array}{ll} i=1+\sum\limits_{l=1}^{h}x_l, \ h=0, 1,2, \ldots, k-1,\\[0.3cm]
s=1, 2, \ldots, x_{h+1}-1.\end{array} \right.$$ (Note that,
when $m>n$, we adopt the convention that $\sum_{i=m}^n x_i=0$.) Using the above
operations, it is easy to see that
$$D=\det \left(\mathbf{M}_{ij}\right)_{1\leqslant i,j\leqslant k},$$
where $\mathbf{M}_{ij}$ is a matrix of size $x_i\times x_j$
with
$$ \mathbf{M}_{ij}=\left\{\begin{array}{ll} m_i\mathbf{I} &  \mbox{if} \  \  i=j,\\[0.2cm]
0 &  \mbox{if} \ \  i\neq j, \ \  v_i\sim v_i \ \ \mbox{in} \ \Gamma,\\[0.2cm]
x_i\mathbf{E}_{1, 1}+\mathbf{E}_{2,1}+\cdots+\mathbf{E}_{x_i, 1} & \mbox{otherwise},
\end{array} \right.$$
where $\mathbf{I}$ is the identity matrix and $\mathbf{E}_{i,j}$
denotes the square matrix having $1$ in the $(i,j)$ position and
$0$ elsewhere.

Therefore, taking out the common factors and developing the
determinant along the columns $j$,
$j\neq1+\sum\limits_{l=1}^{h}x_l$, $h=0, 1, 2, \ldots, k-1$, one
gets
\begin{equation}\label{e2} D=\Phi^{-1}\prod_{i=1}^{k}{m_i}^{x_i}\cdot\det
\left(c_{ij}\right)_{1\leqslant i,j\leqslant k},\end{equation}
where $$ c_{ij}=\left\{\begin{array}{cl}\lambda_i &  \mbox{if} \  \  i=j,\\[0.2cm]
0 &  \mbox{if} \ \  i\neq j, \ \ v_i\sim v_j \  \mbox{in} \ \Gamma,\\[0.2cm]
1 & \mbox{otherwise}.
\end{array} \right.$$
As the reader might have noticed, the matrix
$\left(c_{ij}\right)_{1\leqslant i,j\leqslant k}-{\rm diag}(\lambda_1, \lambda_2, \ldots, \lambda_k)$
is exactly the adjacency matrix of the graph
$\Gamma^c$. Consequently, we
get
$$\det \left( \left[
\begin{array}{cccc}
\lambda_1& c_{12} & \ldots & c_{1k} \\[0.1cm]
c_{21} & \lambda_2 & \ldots & c_{2k} \\[0.1cm]
\vdots & \vdots & \ddots & \vdots \\[0.1cm]
c_{k1} & c_{k2} & \ldots & \lambda_{k} \\[0.1cm]
\end{array}\right ] \right) =\Psi+\sum_{\Lambda}
\det \mathbf{A}_{\Gamma^c}(\Lambda)\lambda_1^{t_1}\lambda_2^{t_2}\cdots
\lambda_{k}^{t_{k}},$$ where $t_i\in \{0, 1\}$, $i=1, 2,
\ldots, k$,  and the summation is over all induced subgraphs
$\Lambda$ of $\Gamma^c$ whose  vertex set $\{v_{i_1}, \ldots,
v_{i_s}\}$ corresponds to 
  $\{i_j| t_{i_j}=0 \}.$  
 This is
substituted in Eq. (\ref{e2}):
$$D=\Psi^{-1}\prod_{i=1}^{k}{m_i}^{x_i}\cdot \Big(\Psi+\sum_{\Lambda}
\det \mathbf{A}_{\Gamma^c}(\Lambda)\lambda_1^{t_1}\lambda_2^{t_2}\cdots
\lambda_{k}^{t_{k}}\Big).$$  \end{proof}

\begin{remark} \label{alt_appch} 
{\rm{In this remark, we describe an alternate approach to the computation of $\kappa\left(\Gamma^\ast \right).$ 
Note that $\mathbf{L}_{\Gamma^\ast}$ can be written as a $k \times k$ block matrix, where, for distinct $i,j=1,\ldots, k,$  the $(i,j)$ off--diagonal block is either $-J$ or $0$ according as $v_i$ is adjacent to $v_j,$ or not, and where the $j$--th diagonal block is $m_jI-J, j=1, \ldots, k.$  From this block structure, 
and applying the technique of equitable partitions (see \cite{BH}), 
it follows readily that the product of the nonzero eigenvalues of $\mathbf{L}_{\Gamma^\ast}$ is equal to $ \alpha_2\cdots \alpha_k \left(\prod_{j=1}^{k}\left( m_j \right)^{x_j-1}\right),$ where $0, \alpha_2, \alpha_3, \ldots, \alpha_k$ are the eigenvalues of the 
the $k \times k$ matrix $\mathbf{S}$ whose entries are given by 
$$s_{pq}=\left\{\begin{array}{lll} 0  &  &  \mbox{if $v_p\neq v_q$ and $v_p$ is not adjacent to $v_q$ in $\Gamma$,} \\[0.2cm]
 -x_q & &\mbox{if $p < q$ and $v_p$ is adjacent to $v_q$ in $\Gamma$},\\[0.2cm]
 -x_p & &\mbox{if $p > q$ and $v_p$ is adjacent to $v_q$ in $\Gamma$},\\[0.2cm]
-\sum_{j\neq p} s_{pj} &  & \mbox{if $p=q$}.\\
\end{array} \right.$$
(Observe that $\mathbf{S}$ is singular since every row sums to $0.$) In order to complete the computation of the complexity of 
$L_{\Gamma^\ast}$, we need to find the product $\alpha_2\cdots \alpha_k$. 
For each $j=1, \ldots, k$, let $\mathbf{S}_{(j)}$ denote the principal sub--matrix of $\mathbf{S}$ formed by deleting 
row $j$ and column $j$. Then $\alpha_2\cdots \alpha_k=\sum_{j=1}^{k} \det(\mathbf{S}_{(j)}).$  

To compute the quantities $\det(\mathbf{S}_{(j)}), j=1, \ldots, k,$ we consider a weighted directed graph $W$ on  vertices $1, \ldots, k,$ whose construction we now describe. 
Begin with the graph $\Gamma$ on  vertices  $v_1, v_2, \ldots, v_k$. 
For each edge $\{v_i, v_j\}$ of $\Gamma$, $W$ contains the arcs $i\rightarrow j$ and $j\rightarrow i$;
if $i<j$, the weight of the arc $i\rightarrow j$ in $W$ is $w(i,j)=x_j$, and the  
weight of the arc $j\rightarrow i$  is $w(j,i)=x_i$; if there is no edge between $v_i$ and $v_j$ in $\Gamma$, 
then $W$ contains neither an arc from $i$ to $j$ nor an arc from $j$ to $i$. 
Fix an index $j$ with $1\leqslant j\leqslant k$. We can find $\det(\mathbf{S}_{(j)})$ from  
 a generalization of the matrix tree theorem as follows (see \cite{Chaiken}). Let $\tau_j$ be the set of all spanning directed subgraphs of $W$ such
that: 

(a) the underlying spanning subgraph is a tree; and 

(b) in the spanning directed subgraph of $W$,  
for each vertex $i\neq j$, there is a directed path from $i$ to $j$. 

For each directed graph $\tau\in \tau_j$, let the weight of $\tau$, $\sigma(\tau)$, be the 
product of the weights of the arcs in $\tau$ (these arc weights are inherited from $W$). Then 
$ \det(\mathbf{S}_{(j)})=\sum_{\tau\in \tau_j} \sigma(\tau).$
So, we have 
$\alpha_2\cdots \alpha_k=\sum_{j=1}^{k}  \sum_{\tau\in \tau_j} \sigma(\tau).$
Consequently, we obtain the following formula for $\kappa(\Gamma^\ast)$:
$$\kappa(\Gamma^\ast)=\frac{\left[ \prod_{j=1}^{k}\left( m_j \right)^{x_j-1}\right]\left[\sum_{j=1}^{k}  \sum_{\tau\in \tau_j} \sigma(\tau)\right]}{n}.$$
}}
\end{remark}


Next, we present some applications of the preceding results.

\noindent
{\it{Application 1, clique--replaced paths}}: Now we consider a particular graph, the path on $k$ vertices  $\Gamma=P_k$, where $k \geqslant 3$ is an integer. For this special case, we 
apply the technique of Remark \ref{alt_appch} in order to obtain the complexity of the graph $\Gamma_{[x_1, \ldots, x_k]}$.  
 For $\Gamma=P_k,$ the directed graph $W$ of Remark \ref{alt_appch} is given as follows.

\vspace{1.5cm} \setlength{\unitlength}{3mm}
\begin{picture}(0,0)(1.5,9)
\linethickness{0.7pt} 

\put(0,10){\circle*{0.5}}
\put(10,10){\circle*{0.5}}
\put(20,10){\circle*{0.5}}

\put(30,10){\circle*{0.5}}
\put(40,10){\circle*{0.5}}

\put(0,8){$1$}
\put(10,8){$2$}
\put(20,8){$3$}
\put(29,8){$n-1$}
\put(40,8){$n$}

\put(0,10){\vector(1,0){10}}
\put(10,10){\vector(1,0){10}}
\put(20,10){\vector(1,0){1}}
\put(30,10){\vector(1,0){10}}
\put(29,10){\vector(1,0){1}}
\put(22,10){$\ldots$}
\put(27,10){$\ldots$}

\qbezier(30,10)(29.5,10.5)(29,10.9)

\qbezier(0,10)(5,15)(10,10)
\qbezier(10,10)(15,15)(20,10)
\qbezier(20,10)(20.5,10.5)(21,10.9)
\qbezier(30,10)(35,15)(40,10)

\put(1,11){\vector(-1,-1){1}}
\put(11,11){\vector(-1,-1){1}}
\put(21,11){\vector(-1,-1){1}}
\put(31,11){\vector(-1,-1){1}}

\end{picture}\\[.5cm]

Observe that $\tau_1$ contains one directed graph of weight $x_1x_2x_3 \ldots x_{n-1}$, while  
$\tau_n$ contains one directed graph of weight $x_2x_3 \ldots x_n.$ Further, for each $j=2, \ldots, n-1,$ we find that $\tau_j$ contains one directed graph of weight $$(x_2 \ldots x_j)(x_j \ldots x_{n-1}).$$ Consequently for the matrix $\mathbf{S}$ of Remark \ref{alt_appch}, we have 
$$\sum_{j=1}^{k} \det(\mathbf{S}_{(j)})=x_1\ldots x_{n-1} + \sum_{j=2}^{n-1}x_j(x_2 \ldots x_{n-1}) + x_2 \ldots x_n = (x_2 \ldots x_{n-1})\sum_{j=1}^n x_j.$$ 
It now follows that 
\begin{eqnarray*}
\kappa\left(\Gamma_{[x_1, \ldots, x_k]}\right) = \hspace{3.75in}\\
(x_1+x_2)^{x_1-1}\Pi_{j=2}^{n-1}(x_{j-1}+x_j+x_{j+1})^{x_j-1} (x_{n-1}+x_n)^{x_n-1}(x_2 \ldots x_{n-1})\sum_{j=1}^n x_j.
\end{eqnarray*}

\noindent
{\it{Application 2, Cayley's theorem}}: In the case when $\Gamma=K_t$ and $x_1=\cdots=x_t=x$, we have  $\Gamma_{[x_1,\ldots,x_t]}=K_{tx}$.  Moreover,  in the situation of  Theorem \ref{result1} we have:
$n_i=\cdots=n_t=tx-1$, $m_i=\cdots=m_t=tx$,   $\lambda_i=\cdots=\lambda_t=t$ and $\Psi=t^t$.
Substitution into Eq. (\ref{formula1}) yields $$\kappa(K_{tx})=\left(\prod_{i=1}^{t}(tx)^{x}\right)(t^t+0)/(t^t(tx)^2)=(tx)^{tx-2},$$  which is equivalent to Cayley's result. \\

\noindent
{\it{Application 3, complexity of}} $\mathcal{P}(\mathbb{Z}_n)$: Given a natural number $n$,  the {\em divisor graph} $D(n)$ of $n$ is the graph with vertex set $\pi_d(n)=\{d_1, \ldots, d_k\}$, the set of all divisors of $n$, in which two distinct divisors $d_i$ and $d_j$ are adjacent if and only if  $d_i|d_j$ or
$d_j|d_i$.   Let $d_1>d_2>\cdots>d_k$ (evidently $d_1=n$ and $d_k=1$). This shows that (see also \cite[Theorem 2.2]{Mehranian}): 
\begin{equation}\label{eq4} \mathcal{P}(\mathbb{Z}_n)=D(n)_{[\phi(d_1), \ldots, \phi(d_k)]}.\end{equation}
In what follows, we put $\Gamma=D(n)$,  
$n_i=\phi(d_i)-1+\sum_{d_j\in N_\Gamma(d_i)} \phi({d_j})$,
$m_i=n_i+1$,
 $\lambda_i=\frac{m_i}{\phi(d_i)}, \ \ i=1, \ldots, k,$ and 
 $\Phi=\prod_{i=2}^{k-1} \lambda_i, \ \  \Psi=\prod_{i=1}^{k} \lambda_i.$
By using Theorem \ref{result1}, we have the following alternate proof of a result in \cite{MRSS}.  

\begin{corollary}{\rm \cite[Theorem 4.1]{MRSS}}\label{result2}  Let $d_1>d_2>\cdots>d_k$ be the divisors of a positive integer
$n$. With the notation as above, we have 
$$\kappa(\mathbb{Z}_{n})=\prod_{i=1}^{k}m_i^{\phi(d_i)}\Big(\Phi+
\sum_{\Lambda} \det\mathbf{A}_{\Gamma^c}(\Lambda)\lambda_2^{t_2}
\lambda_3^{t_3}\cdots \lambda_{k-1}^{t_{k-1}}\Big)/(\Phi
n^2),$$ where $t_i\in \{0, 1\}$, $2\leqslant i\leqslant 
k-1$, and the summation is over all induced subgraphs $\Lambda$ of
$\Gamma^c\setminus \{d_1, d_k\}$   whose  vertex set $\{d_{i_1}, \ldots,
d_{i_s}\}$ corresponds to 
 $\{i_j| t_{i_j}=0 \}.$  
\end{corollary}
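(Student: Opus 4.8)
The plan is to specialize Theorem \ref{result1} to the graph $\Gamma=D(n)$, using the identification $\mathcal{P}(\mathbb{Z}_n)=D(n)_{[\phi(d_1),\ldots,\phi(d_k)]}$ from Eq. (\ref{eq4}), and then to simplify the general formula by exploiting the two extreme divisors $d_1=n$ and $d_k=1$. The key structural fact is that in $D(n)$ both $d_1=n$ and $d_k=1$ are \emph{universal} vertices: every divisor divides $n$ and is divisible by $1$, so $d_1$ and $d_k$ are adjacent to all other vertices of $D(n)$. Consequently, in the complement $\Gamma^c$ the vertices $d_1$ and $d_k$ are \emph{isolated}, having no neighbors at all. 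First I would write down the formula of Theorem \ref{result1} verbatim with $x_i=\phi(d_i)$, so that the complexity equals $\prod_{i=1}^{k} m_i^{\phi(d_i)}\bigl(\Psi+\sum_\Lambda \det \mathbf{A}_{\Gamma^c}(\Lambda)\lambda_1^{t_1}\cdots\lambda_k^{t_k}\bigr)/(\Psi n^2)$, with $\Psi=\prod_{i=1}^k \lambda_i=\lambda_1\lambda_k\Phi$.

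The heart of the argument is analyzing which terms in the sum $\sum_\Lambda \det \mathbf{A}_{\Gamma^c}(\Lambda)\,\lambda_1^{t_1}\cdots\lambda_k^{t_k}$ survive. Recall that each induced subgraph $\Lambda$ of $\Gamma^c$ corresponds to the index set $\{i : t_i=0\}$, and the exponent pattern places $\lambda_i^{1}$ exactly on the vertices \emph{not} in $\Lambda$. The next step is to observe that any $\Lambda$ \emph{containing} $d_1$ or $d_k$ contributes $\det \mathbf{A}_{\Gamma^c}(\Lambda)=0$: since $d_1$ and $d_k$ are isolated in $\Gamma^c$, the corresponding row of the adjacency submatrix $\mathbf{A}_{\Gamma^c}(\Lambda)$ is identically zero, forcing a zero determinant (unless $\Lambda$ is the single isolated vertex, whose $1\times 1$ adjacency matrix is $[0]$, again with determinant zero). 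Hence every surviving nontrivial $\Lambda$ omits both $d_1$ and $d_k$, which means $t_1=t_k=1$ is forced, and such $\Lambda$ ranges precisely over induced subgraphs of $\Gamma^c\setminus\{d_1,d_k\}$. This is exactly the index set in the statement of Corollary \ref{result2}, and in every surviving monomial the factor $\lambda_1^{t_1}\lambda_k^{t_k}=\lambda_1\lambda_k$ may be pulled out, reducing the remaining product to $\lambda_2^{t_2}\cdots\lambda_{k-1}^{t_{k-1}}$.

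With that in hand, the final step is purely algebraic bookkeeping. Factoring $\lambda_1\lambda_k$ out of the whole bracketed expression and using $\Psi=\lambda_1\lambda_k\Phi$, the leading term $\Psi$ becomes $\lambda_1\lambda_k\Phi$ and each surviving sum term becomes $\lambda_1\lambda_k\det\mathbf{A}_{\Gamma^c}(\Lambda)\lambda_2^{t_2}\cdots\lambda_{k-1}^{t_{k-1}}$. Thus
$$\Psi+\sum_\Lambda \det \mathbf{A}_{\Gamma^c}(\Lambda)\lambda_1^{t_1}\cdots\lambda_k^{t_k}=\lambda_1\lambda_k\Bigl(\Phi+\sum_\Lambda \det\mathbf{A}_{\Gamma^c}(\Lambda)\lambda_2^{t_2}\cdots\lambda_{k-1}^{t_{k-1}}\Bigr),$$
and dividing by $\Psi n^2=\lambda_1\lambda_k\Phi n^2$ cancels the common factor $\lambda_1\lambda_k$, yielding the claimed formula with $\Phi$ in place of $\Psi$ and the restricted summation. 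I expect the main obstacle to be the careful justification that the isolated-vertex argument kills \emph{all} terms with $t_1=0$ or $t_k=0$ — in particular, handling the edge case where $\Lambda$ consists of just the isolated vertex and confirming the empty-subgraph convention matches the $\Psi$ (respectively $\Phi$) term — together with verifying that the exponent map $t\mapsto\{i:t_i=0\}$ is tracked consistently so that the factor $\lambda_1\lambda_k$ genuinely appears in every surviving monomial. Once the vanishing is nailed down, the rest is a clean cancellation.
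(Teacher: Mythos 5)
Your proposal is correct and follows essentially the same route as the paper's own proof: specialize Theorem \ref{result1} via Eq. (\ref{eq4}), note that $d_1$ and $d_k$ are universal in $D(n)$ and hence isolated in $\Gamma^c$ so that every $\Lambda$ meeting them contributes a zero determinant, force $t_1=t_k=1$ in the surviving terms, factor out $\lambda_1\lambda_k$, and cancel against $\Psi=\lambda_1\lambda_k\Phi$. Your extra care with the $1\times 1$ isolated-vertex case is a minor refinement the paper leaves implicit, but the argument is the same.
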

\begin{proof} Using Eq. (\ref{eq4}) and Theorem \ref{result1}, we obtain 
\begin{equation}\label{eq5} \kappa(\mathbb{Z}_n)=\prod_{i=1}^{k}m_i^{\phi(d_i)}\Big(\Psi+
\sum_{\Lambda'} \det \mathbf{A}_{\Gamma^c}(\Lambda')\lambda_1^{t_1}
\lambda_2^{t_2}\cdots \lambda_{k}^{t_{k}}\Big)/(\Psi
n^2),\end{equation} where $t_i\in \{0, 1\}$, $i=1,  \ldots,
k$, and the summation is over all induced subgraphs $\Lambda'$ of
$\Gamma^c$   whose  vertex set $\{d_{i_1}, \ldots,
d_{i_s}\}$ corresponds to 
  $\{i_j| t_{i_j}=0 \}.$  
 Since $\deg_\Gamma(d_1)=\deg_\Gamma(d_k)=k-1$, we obtain 
$\deg_{\Gamma^c}(d_1)=\deg_{\Gamma^c}(d_k)=0$. Thus, if an induced subgraph $\Lambda'$ of
$\Gamma^c$ contains $d_1$ or $d_k$, then  $\det \mathbf{A}_{\Gamma^c}(\Lambda')=0$, while if it does not contain $d_1$ and $d_k$, then the sum $\sum_{\Lambda'} \det \mathbf{A}_{\Gamma^c}(\Lambda)\lambda_1^{t_1}
\lambda_2^{t_2}\cdots \lambda_{k}^{t_{k}}$ is divisible by $\lambda_1\lambda_k$. Hence, we can write 
$$\sum_{\Lambda'} \det \mathbf{A}_{\Gamma^c}(\Lambda)\lambda_1^{t_1}
\lambda_2^{t_2}\cdots \lambda_{k}^{t_{k}}=\lambda_1\lambda_k\sum_{\Lambda} \det \mathbf{A}_{\Gamma^c}(\Lambda)
\lambda_2^{t_2}\cdots \lambda_{k-1}^{t_{k-1}}$$
where the $\Lambda$ run over all induced subgraphs of $\Gamma^c\setminus \{d_1, d_k\}$   whose  vertex set $\{d_{i_1}, \ldots,
d_{i_s}\}$ corresponds to 
  $\{i_j| t_{i_j}=0 \}.$  
 Substituting this in Eq. (\ref{eq5}) and simplifying now yields the
result.
\end{proof}


\section{ Computing the Complexity $\kappa(G)$}
In this section we consider the problem of finding the complexity of power graphs associated with
certain finite groups. 
\subsection {The simple groups $L_2(q)$}  Let $q=p^n\geqslant 4$  for a prime $p$ and some $n\in {\Bbb N}$. 
We are going to find an explicit formula for $\kappa(L_2(q))$. Before we start,  we need some well known facts about the simple groups $G=L_2(q)$, $q\geqslant 4$, which are proven in \cite{Hup}:
\begin{itemize}
\item[{\rm (a)}]   $|G|=q(q-1)(q+1)/k$ and $\mu(G)=\{p, (q-1)/k, (q+1)/k\}$, where $k={\rm gcd}(q-1, 2)$.

\item[{\rm (b)}] Let $P$ be a Sylow $p$--subgroup of $G$. Then  $P$ is an elementary abelian $p$--group of order
$q$, which is a TI--subgroup, and  $|N_G(P)|=q(q-1)/k$.

\item[{\rm (c)}] Let $A\subset G$ be a cyclic subgroup of order $(q-1)/k$. Then $A$ is a TI--subgroup and
the normalizer $N_G(A)$ is a dihedral group of order $2(q-1)/k$.

\item[{\rm (d)}] Let $B\subset G$ be a cyclic subgroup of order $(q+1)/k$. Then  $B$ is a TI--subgroup
and the normalizer $N_G(B)$ is a dihedral group of order $2(q+1)/k$.
\end{itemize}
We recall that a subgroup  $H\leqslant G$  is a {\em TI--subgroup} (trivial intersection subgroup) if for every $g\in G$, either $H^g=H$ or $H\cap H^g=\{1\}$.

\begin{theorem} \label{char2} Let $q=p^n$, with $p$ prime  and $n\in {\Bbb N}$  and let $G=L_2(q)$.  Then we have:
$$\kappa(G)=p^{\frac{(q^2-1)(p-2)}{p-1}}\cdot \kappa\left({\Bbb Z}_{\frac{q-1}{k}}\right)^{q(q+1)/2}\cdot \kappa\left({\Bbb Z}_{\frac{q+1}{k}}\right)^{q(q-1)/2},$$
where $k={\rm gcd}(q-1, 2)$, except exactly in the cases $(p, n)=(2, 1)$, $(3, 1)$.
In particular, we have 
\begin{itemize}
\item[{\rm (1)}]$A_5\cong L_2(5)\cong L_2(4)$ and $\kappa(A_5)=3^{10}\cdot 5^{18}$ {\rm (see \cite{MRSS})}.

\item[{\rm (2)}]$L_3(2)\cong L_2(7)$ and  $\kappa(L_3(2))=2^{84}\cdot 3^{28}\cdot 7^{40}$.

\item[{\rm (3)}]$A_6\cong L_2(9)$ and $\kappa(A_6)=2^{180}\cdot 3^{40}\cdot 5^{108}.$
\end{itemize}
\end{theorem}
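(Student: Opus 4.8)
The plan is to decompose the power graph $\mathcal{P}(G)$ for $G=L_2(q)$ into a join of the identity vertex with a disjoint union of power graphs coming from the various cyclic subgroups, and then exploit the multiplicativity built into Lemma~\ref{elementary0} together with the structural facts (a)--(d). The key structural observation is that $L_2(q)$ has a partition of its nonidentity elements governed by its maximal cyclic subgroups: the elements of order dividing $p$ lie in conjugates of the Sylow $p$--subgroup $P$, the elements whose order divides $(q-1)/k$ lie in conjugates of the cyclic group $A$, and those whose order divides $(q+1)/k$ lie in conjugates of $B$. Because each of $P$, $A$, $B$ is a TI--subgroup, distinct conjugates meet only in the identity; hence after removing the identity, the remaining vertices split into connected components, one for each conjugate, and two nonidentity elements are joined in $\mathcal{P}(G)$ precisely when they lie in a common cyclic subgroup. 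This is the engine that turns the group--theoretic data into a clean graph join/union decomposition.

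First I would count the conjugates of each subgroup using the normalizer orders from (b)--(d): the number of conjugates of $P$ is $[G:N_G(P)]=q+1$, of $A$ is $[G:N_G(A)]=q(q+1)/2$, and of $B$ is $[G:N_G(B)]=q(q-1)/2$ (with the appropriate $k$). Next I would identify the induced subgraph on each conjugate. Since $P\cong \mathbb{E}_{q}=\mathbb{E}_{p^n}$ is elementary abelian, the induced power graph on $P$ (minus identity) is that of an EPO--group, so Corollary~\ref{cor-epo} gives its contribution; the factor $p^{(q^2-1)(p-2)/(p-1)}$ is exactly $\big(\kappa(\mathbb{E}_{p^n})$--type data$\big)$ assembled over the $q+1$ conjugates, since $(q+1)\cdot(p^n-1)/(p-1)=(q^2-1)/(p-1)$. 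For each conjugate of $A$ (respectively $B$), the induced power graph on the nonidentity elements is the power graph of the cyclic group $\mathbb{Z}_{(q-1)/k}$ (respectively $\mathbb{Z}_{(q+1)/k}$) with its identity vertex deleted. The heart of the computation is then to show that when one takes the join of the single universal identity vertex with the disjoint union of all these ``deleted'' power graphs, applying Lemma~\ref{elementary0}(2) and Eq.~(\ref{eq1}) reassembles precisely the product $\kappa(\mathbb{Z}_{(q-1)/k})^{q(q+1)/2}\cdot \kappa(\mathbb{Z}_{(q+1)/k})^{q(q-1)/2}$, with one copy of $\kappa(\mathbb{Z}_{(q\mp1)/k})$ per conjugate.

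The main obstacle I anticipate is the bookkeeping in the Laplacian spectrum when forming the join with the identity vertex. Writing $\mathcal{P}(G)=K_1\vee\big(\bigoplus_c \mathcal{P}(H_c)^{-}\big)$, where $H_c$ ranges over all the conjugates and $\mathcal{P}(H_c)^{-}$ denotes the power graph of $H_c$ with its identity deleted, Lemma~\ref{elementary0}(2) shifts every nonzero Laplacian eigenvalue of the union upward by $1$ and introduces a top eigenvalue equal to $|G|$. To recover $\kappa(\mathbb{Z}_m)=\mu_1\cdots\mu_{m-1}/m$ for each cyclic piece one must check that deleting the identity vertex from $\mathcal{P}(\mathbb{Z}_m)$ and then re--adjoining a \emph{global} identity reproduces the correct shifted spectrum; the delicate point is that the identity of $\mathbb{Z}_m$ is a universal vertex of $\mathcal{P}(\mathbb{Z}_m)$, so removing it and adding back $K_1$ in the join is spectrally consistent only because the shift by $m$ in the single--group formula matches the shift by $1$ coming from the join of $\mathcal{P}(H_c)^{-}$ with $K_1$ after accounting for the full vertex count $|G|$. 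Verifying that these shifts telescope correctly across all conjugates, and confirming the exceptional cases $(p,n)=(2,1),(3,1)$ are exactly where the TI/partition structure degenerates (because there $(q-1)/k$ or $(q+1)/k$ collapses and the cyclic subgroups are no longer disjoint in the required way), is the step I would treat most carefully.

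Finally, I would verify the three numerical special cases (1)--(3) by direct substitution, which also serves as a consistency check on the exponent arithmetic; for instance, for $L_2(7)$ one has $q=7$, $k=2$, so $p=7$ contributes $7^{(48)(5)/6}=7^{40}$, the $A$--part gives $\kappa(\mathbb{Z}_3)^{28}=3^{1\cdot 28}=3^{28}$, and the $B$--part gives $\kappa(\mathbb{Z}_4)^{21}=2^{4\cdot 21}=2^{84}$, matching the claimed $2^{84}\cdot 3^{28}\cdot 7^{40}$.
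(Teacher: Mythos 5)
Your proposal is correct and is essentially the paper's own argument: the identical decomposition of $G=L_2(q)$ into conjugates of the TI--subgroups $P$, $A$, $B$ (with the same counts $q+1$, $q(q+1)/2$, $q(q-1)/2$), the same use of Corollary \ref{cor-epo} for the elementary abelian factor, and the same final assembly of exponents. The only real difference is packaging: the paper obtains the multiplicativity $\kappa(G)=\kappa_G(P)^{q+1}\cdot\kappa_G(A)^{q(q+1)/2}\cdot\kappa_G(B)^{q(q-1)/2}$ by citing Theorem 3.4(b) of \cite{MRSS}, whereas you re--derive that step from Lemma \ref{elementary0} and Eq.~(\ref{eq1}); your derivation is sound, the key point being that each shifted zero Laplacian eigenvalue contributes a factor of $1$, so the per--conjugate products telescope exactly as claimed.
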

\begin{proof}
Let  $q=p^n$, with $p$ prime and $n\in {\Bbb N}$, and $(p, n)\neq (2, 1), (3, 1)$.
As already mentioned,  $G$ contains abelian subgroups
 $P$, $A$ and $B$, of orders $q$, $(q-1)/k$ and $(q+1)/k$, respectively, every distinct pair of their conjugates 
 intersects trivially, and every element
of $G$ is a conjugate of an element in $P\cup A\cup B$.
Let $$G=N_Pu_1\cup \cdots \cup N_Pu_r=N_Av_1\cup \cdots \cup N_Av_s= N_Bw_1\cup \cdots \cup N_Bw_t,$$
be coset decompositions of $G$ by $N_P=N_G(P)$, $N_A=N_G(A)$ and $N_B=N_G(B)$, where
  $r=[G:N_P]=q+1$, $s=[G:N_A]=q(q+1)/2$  and  $t=[G:N_B]=(q-1)q/2$.
Then, we have
\begin{equation} \label{eq-6}
G=P^{u_1}  \cup \cdots \cup P^{u_r}\cup A^{v_1}\cup \cdots \cup A^{v_s}\cup B^{w_1} \cup \cdots \cup B^{w_t}. \end{equation}
Applying Theorem 3.4 (b) in \cite{MRSS} to Eq. (\ref{eq-6}),  we obtain
$$\kappa(G)=\kappa_G(P)^r\cdot \kappa_G(A)^s\cdot \kappa_G(B)^t=\kappa\left({\Bbb E}_{q}\right)^r\cdot \kappa\left({\Bbb Z}_{\frac{q-1}{k}}\right)^s\cdot \kappa\left({\Bbb Z}_{\frac{q+1}{k}}\right)^t,$$
and so by Corollary \ref{cor-epo}, we get 
$\kappa(G)=\left(p^{\frac{q-1}{p-1}(p-2)}\right)^r\cdot \kappa\left({\Bbb Z}_{\frac{q-1}{k}}\right)^s\cdot \kappa\left({\Bbb Z}_{\frac{q+1}{k}}\right)^t.$ 
The result follows.
\end{proof}


\subsection {Extra--special $p$--groups of order $p^3$}
In the sequel, $P$ will be a $p$--group, with $p$   prime. 
We recall below some facts about extra--special groups and other necessary information. We begin with the definition of the extra special groups.
 A $p$--group $P$ is called {\em extra--special} if $Z(P) = [P,P] = \Phi(P)\cong \mathbb{Z}_p$, where   $\Phi(P)$ is the Frattini subgroup of $P$. 
If $P$ is an extra--special $p$--group, then  the order of $P$ is $p^{2n+1}$ for some positive integer $n$. The smallest nonabelian extra--special groups are of order $p^3$.  When 
$p=2$, there are, up to isomorphism, two extra--special $2$--group 
of order $8$, namely, $D_8$ and $Q_8$. The exponent of both of these groups is $p^2=4$.  Furthermore, from 
\cite[Table 1]{MRSS}, we have $\kappa(D_8)=2^4$ and $\kappa(Q_8)=2^{11}$.   

For each odd prime $p$, up to isomorphism, 
 there are just two non--isomorphic extra--special $p$--groups of order $p^3$. The first one has exponent $p$, which is called the {\em Heisenberg group}  and denoted by $H_p$.  In fact, $H_p$ as a subgroup of ${\rm GL}(3,p)$ can be 
 presented in the following way:
$$H_p=\left\{ \left(\begin{array} {lll} 1 & 0 & 0\\ x & 1 & 0\\ z & y & 1\end{array}\right) \ \Bigg | \ x, y, z\in {\rm GF}(p) \right\}.$$
The other one has exponent $p^2$, which is denoted by $A_p$, and contains 
transformations $x\mapsto ax+b$ from $\mathbb{Z}_{p^2}$ to $\mathbb{Z}_{p^2}$, where $a\equiv 1\pmod{p}$ and $b\in \mathbb{Z}_{p^2}$.

The groups $H_p$ and $A_p$  are usually presented as: 
$$H_p=\langle x, y, z \ | \ x^p=y^p=z^p=1, [x,y]=z, [x,z]=[y,z]=1\rangle,$$
and
$$A_p=\langle x, y\ | \ x^{p}=y^{p^2}=1, y^x=y^{p+1}\rangle.$$
\begin{theorem}\label{extra-p} Let $p$ be an odd prime. Then, we have:
\begin{itemize}
\item[{\rm (a)}] $\kappa(H_p)=p^{(p-2)(p^2+p+1)}$.
\item[{\rm (b)}] $\kappa(A_p)=p^{2p^3-p-5}$.
\end{itemize}
\end{theorem}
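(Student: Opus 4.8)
The plan is to compute the complexity of each extra--special $p$--group by determining the structure of its power graph, expressing that power graph in the language of the clique--replaced graphs and EPO--type decompositions developed above, and then applying the formulas already in hand. The key organizing observation is that both $H_p$ and $A_p$ have center $Z$ of order $p$, with $[P,P]=Z$, so every element lies in a cyclic subgroup whose structure is tightly constrained by the exponent. For part (a), the Heisenberg group $H_p$ has exponent $p$, so every nonidentity element has order $p$; thus $H_p$ is an EPO--group and I would invoke Corollary \ref{cor-epo} directly. The only remaining task is to count $c_p$, the number of cyclic subgroups of order $p$. Since $|H_p|=p^3$ and each subgroup of order $p$ contributes $p-1$ nonidentity elements, and these subgroups pairwise intersect only in the identity, I get $c_p=(p^3-1)/(p-1)=p^2+p+1$. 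Substituting into $\kappa(G)=\prod_{p}p^{(p-2)c_p}$ yields $\kappa(H_p)=p^{(p-2)(p^2+p+1)}$, which is exactly statement (a).

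For part (b), the group $A_p$ has exponent $p^2$, so it is \emph{not} an EPO--group and the argument must be more delicate. Here I would first analyze the cyclic subgroup structure. The element $y$ has order $p^2$, and I expect $A_p$ to contain several cyclic subgroups of order $p^2$ together with some cyclic subgroups of order $p$ lying outside the union of the order-$p^2$ subgroups (in particular the center $\langle z\rangle=\langle y^p\rangle$ is contained in every order-$p^2$ cyclic subgroup). The power graph $\mathcal{P}(A_p)$ is then built from the identity as a universal vertex joined to the union of the power graphs on the nonidentity elements of these cyclic subgroups, where each cyclic $\mathbb{Z}_{p^2}$ contributes a clique on its generators ($\phi(p^2)=p^2-p$ vertices) joined to a clique on the $p-1$ nonidentity elements of its unique subgroup of order $p$. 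The clean way to assemble this is via the decomposition technique of \cite[Theorem 3.4]{MRSS} (already used in the $L_2(q)$ proof) or by direct use of Lemma \ref{elementary0} together with the join/union structure, exactly as was done for $Q_{2^n}$ in Corollary \ref{cor-quater}.

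The main obstacle, and the step requiring the most care, will be correctly enumerating the cyclic subgroups of $A_p$ and how they overlap, since their intersections are governed by the shared central subgroup $\langle z \rangle$ rather than being trivial. Concretely I would determine the number of cyclic subgroups of order $p^2$ and the number of order-$p$ subgroups not contained in any of them; a counting argument on the $p^3-p$ elements of order $p^2$ (each order-$p^2$ subgroup having $p^2-p$ generators) and the $p^2-1$ elements of order dividing $p$ should pin these down. Once the subgroup lattice is settled, the power graph takes the form $K_1 \vee \big(\bigoplus_\ell \Delta_\ell\big)$ where each $\Delta_\ell$ is itself a join of a $K_{p^2-p}$ with a $K_{p-1}$ (coming from a single $\mathbb{Z}_{p^2}$) or a bare $K_{p-1}$; applying Lemma \ref{elementary0} to read off the Laplacian spectrum and then Eq. (\ref{eq1}) to multiply the nonzero eigenvalues and divide by $|A_p|=p^3$ should collapse the exponent to $2p^3-p-5$. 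I would verify the final exponent by a degree-of-freedom check against the total number of vertices $p^3$ and against small cases, since off-by-a-factor-of-$p$ errors are the likeliest pitfall in the bookkeeping.
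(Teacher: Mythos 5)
Part (a) of your proposal is correct and is essentially the paper's own argument: $H_p$ has exponent $p$, hence is an EPO--group whose $c_p=(p^3-1)/(p-1)=p^2+p+1$ cyclic subgroups of order $p$ intersect pairwise trivially, and Corollary \ref{cor-epo} (equivalently, the decomposition theorem of \cite{MRSS} that the paper invokes directly) gives $\kappa(H_p)=p^{(p-2)(p^2+p+1)}$.

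Part (b) is where the genuine problem lies, and it is not one you can repair, because your (essentially correct) structural analysis, if carried to completion, refutes the stated formula rather than proving it. First, a slip in your outline: you cannot write $\mathcal{P}(A_p)=K_1\vee\bigl(\bigoplus_\ell \Delta_\ell\bigr)$ with each $\Delta_\ell=K_{p^2-p}\vee K_{p-1}$, since all the cyclic subgroups of order $p^2$ (there are exactly $p$ of them, because $A_p$ has $p^3-p^2$ elements of order $p^2$, not $p^3-p$ as you wrote) contain the \emph{same} central subgroup $Z=\langle y^p\rangle$, so the central clique $K_{p-1}$ cannot be distributed into disjoint summands. The correct structure, which your own remark about the shared center points to, is
$$\mathcal{P}(A_p)=K_1\vee\Bigl[\bigl(K_{p-1}\vee (p\,K_{p^2-p})\bigr)\oplus p\,K_{p-1}\Bigr],$$
where the trailing $p\,K_{p-1}$ comes from the $p$ subgroups of order $p$ other than $Z$ inside $\Omega_1=\{g\in A_p : g^p=1\}\cong{\Bbb E}_{p^2}$; in particular the nonidentity central elements are \emph{not} universal vertices (they are not adjacent to elements of $\Omega_1\setminus Z$), exactly as Lemma \ref{universal} predicts. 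Applying Lemma \ref{elementary0} and Eq. (\ref{eq1}) to this graph, the nonzero Laplacian eigenvalues are $p^3$, then $p^3-p^2+p$ with multiplicity $p-1$, $p^2$ with multiplicity $p^3-p^2-p$, $p$ with multiplicity $p^2-p-1$, and $1$ with multiplicity $p$, whence
$$\kappa(A_p)=(p^2-p+1)^{p-1}\,p^{\,2p^3-p^2-2p-2},$$
which is not even a power of $p$ (for $p=3$ it equals $49\cdot 3^{37}$). So the exponent will not ``collapse to $2p^3-p-5$,'' and the small--case check you wisely proposed would have exposed this. For completeness: the paper's own proof of (b) is also flawed; it asserts $\mathcal{P}(A_p)=K_p\vee\bigl[(p+1)K_{p^2-p}\bigr]$, which wrongly makes all of $Z$ universal and wrongly treats $\Omega_1\setminus Z$ as a clique (contradicting Lemma \ref{universal}), and even on its own terms its displayed eigenvalue list leads to $p^{2p^3-p-4}$, which does not match the stated $p^{2p^3-p-5}$.
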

\begin{proof} (a) Clearly,  we have $$H_p=\bigcup_{j=1}^{p^2+p+1}C_j,$$
where  $C_j\subset H_p$ is a subgroup of order $p$, and $C_i\cap C_j=1$ for $i\neq j$.
Now, by Theorem 3.4 (b) in \cite{MRSS}, we obtain 
$$\kappa(H_p)=\prod_{j=1}^{p^2+p+1} \kappa(C_j)=\prod_{j=1}^{p^2+p+1} p^{p-2}=p^{(p-2)(p^2+p+1)}, $$ 
as desired.

(b) In this case, we have $$A_p=\bigcup_{j=1}^{p+1}B_j,$$
where  $B_j\subset A_p$ is a subgroup of order $p^2$, and $B_i\cap B_j=Z(A_p)$ for $i\neq j$.
Therefore, the power graph of $A_p$ has the following form
$${\cal P}(A_p)=K_p\vee \left[(p+1)K_{p^2-p}\right].$$
It follows by Lemma \ref{elementary0} that  the eigenvalues of Laplacian matrix  $\mathbf{L}_{{\cal P}(A_p)}$ are:
$$p^3, \underbrace{p^2, \ p^2, \ \ldots, \ p^2}_{p-1}, \  \underbrace{p^2, \  p^2, \ \ldots, \ p^2}_{p^3-2p-1},   \  \underbrace{p, \  p, \ \ldots, \ p}_{p}, \ 0. $$
 Using Eq. (\ref{eq1}), we get 
$\kappa(A_p)=p^{2p^3-p-4}$,
as required.
\end{proof}


\subsection{Frobenius groups} Suppose $1\subset H\subset G$ and  $H\cap H^g=1$ whenever $g\in  G\setminus H$. Then $H$ is a {\em Frobenius complement} in $G$. A group which contains a Frobenius complement is called a {\em Frobenius group}.
A famous theorem of Frobenius asserts that in a Frobenius group $G$ with a Frobenius complement $H$, the set $$F=\left(G\setminus \bigcup_{g\in G} H^g\right)\cup\{1\},$$
is a normal subgroup of $G$ and $G=FH$, $F\cap H=1$. We call $F$ the {\em Frobenius kernel} of $G$.
\begin{theorem} \label{lm-3} Let $G$ be a Frobenius group,
$H$ a Frobenius complement and $F$ the Frobenius kernel
corresponding with $H$. Then, we have:  $$\kappa(G)=
\kappa_G(F)\kappa_G(H)^{|F|}.$$ In particular, if $G$ is a
nonabelian group of order $pq$, where $p<q$ are primes, then
$\kappa(G)=q^{q-2}p^{(p-2)q} .$
\end{theorem}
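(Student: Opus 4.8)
The plan is to realize $G$ as a union of pairwise trivially intersecting subgroups and then invoke the multiplicativity of the complexity over such a decomposition --- precisely the tool (Theorem 3.4(b) of \cite{MRSS}) already used in the proofs of Theorems \ref{char2} and \ref{extra-p}. First I would record the structural facts supplied by Frobenius's theorem as stated just above: $F$ is a normal subgroup with $G = FH$ and $F \cap H = 1$, and $F = \{1\} \cup (G \setminus \bigcup_{g \in G} H^g)$. Normality of $F$ gives $F \cap H^g = (F \cap H)^g = 1$ for every $g \in G$, while the defining Frobenius property $H \cap H^g = 1$ for $g \notin H$ gives $H^{g_i} \cap H^{g_j} = 1$ for distinct conjugates. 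Together these show that $F$ and the distinct conjugates of $H$ form a family of pairwise trivially intersecting subgroups, and the description of $F$ shows that they cover $G$: every nonidentity element lies either in $F \setminus \{1\}$ or in exactly one $H^g \setminus \{1\}$.

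Next I would count the conjugates of $H$. A Frobenius complement is self-normalizing, i.e. $N_G(H) = H$, since $H^n = H$ with $n \notin H$ would force $H \cap H^n = H \neq 1$, contradicting the Frobenius property; hence the number of distinct conjugates equals $[G : N_G(H)] = [G : H] = |F|$. Writing $G = F \cup H^{g_1} \cup \cdots \cup H^{g_{|F|}}$ and applying the multiplicativity result yields $\kappa(G) = \kappa_G(F)\prod_{i=1}^{|F|}\kappa_G(H^{g_i})$. Because conjugation by $g_i$ is an automorphism of $G$ and hence induces an isomorphism of the induced power graphs $\mathcal{P}(G, H) \cong \mathcal{P}(G, H^{g_i})$, each factor equals $\kappa_G(H)$, giving $\kappa(G) = \kappa_G(F)\,\kappa_G(H)^{|F|}$.

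For the special case of a nonabelian group $G$ of order $pq$ with $p < q$ primes, I would make the Frobenius structure explicit: such a group has a normal Sylow $q$-subgroup $F \cong \mathbb{Z}_q$, which serves as the kernel, and a Sylow $p$-subgroup $H \cong \mathbb{Z}_p$ serving as complement, with $|F| = q$. Since $\mathbb{Z}_q$ and $\mathbb{Z}_p$ are cyclic of prime order, their power graphs are the complete graphs $K_q$ and $K_p$, so Cayley's theorem gives $\kappa_G(F) = \kappa(\mathbb{Z}_q) = q^{q-2}$ and $\kappa_G(H) = \kappa(\mathbb{Z}_p) = p^{p-2}$. Substituting into the general formula gives $\kappa(G) = q^{q-2}(p^{p-2})^q = q^{q-2}p^{(p-2)q}$, as claimed.

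The computations here are light; the one point that genuinely needs care is verifying the hypotheses of the trivial-intersection decomposition --- namely that $F$ together with the conjugates of $H$ really partition $G \setminus \{1\}$ and that there are exactly $|F|$ of the latter. If one did not wish to cite \cite{MRSS}, the only substantive work would be re-deriving the multiplicativity formula itself, which follows from the observation that under such a decomposition $\mathcal{P}(G) = K_1 \vee \bigoplus_i(\mathcal{P}(X_i) - \{1\})$, together with an application of Lemma \ref{elementary0} and Eq. (\ref{eq1}); the identity vertex being universal is exactly what glues the pieces into a join, and it is this step, rather than the group theory, that carries the real content.
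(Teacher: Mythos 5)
Your proposal is correct and follows essentially the same route as the paper: decompose $G$ as the union of $F$ together with the $|F|$ conjugates of $H$ (pairwise trivially intersecting) and apply the multiplicativity result, Theorem 3.4(b) of \cite{MRSS}. The paper's own proof is terser---it simply writes $G=F\cup\bigcup_{g\in F}H^g$ and invokes that theorem, omitting both the verification of the trivial-intersection hypotheses and the derivation of the $pq$ case---so your added details (self-normalization of $H$, the conjugate count $[G:H]=|F|$, and Cayley's formula for the prime-order pieces) are welcome but not a departure in method.
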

\begin{proof} Let $G$ be a Frobenius group, let $H$ be its Frobenius complement
and $F$ its Frobenius kernel. Then $G$ can be written as the union
of its subgroups:
$$G=F\cup \bigcup_{g\in F}H^g.$$
 Again, it follows from Theorem 3.4 (b) in \cite{MRSS} that
$$\kappa(G)=\kappa_G(F)\prod_{g\in
F}\kappa_G(H^g)=\kappa_G(F)\kappa_G(H)^{|F|},$$ as required.
\end{proof}


\begin{center}
 {\bf Acknowledgments }
\end{center}
This work was done while the second author had a visiting position at the
Department of Mathematical Sciences, Kent State
University, USA. He is thankful for the hospitality of the Department of Mathematical Sciences of KSU.
The research of the first author is supported by the Natural Sciences
and Engineering Research Council of Canada under grant number
RGPIN/6123--2014.


\begin{thebibliography}{99}
\bibitem{Kelarev} J. H. Abawajy, A. V. Kelarev and M. Chowdhury,  Power graphs:
a survey, {\em Electronic J. Graph Theory and Applications}, 1 (2013)
(2), 125--147.

\bibitem{Chakrabarty}  I. Chakrabarty, S. Ghosh and M. K. Sen, Undirected power graphs of semigroups, {\em Semigroup Forum}, 78 (2009), 410--426.

\bibitem{Biggs} N. Biggs,   {\em Algebraic Graph Theory},  Cambridge University Press, London, 1974.

\bibitem{BH} A. E. Brouwer and W. H. Haemers, {\em Spectra of Graphs}, Springer, New York, 2012.  

\bibitem{Cameron}  P. J. Cameron,  The power graph of a finite
group. II,  {\em J. Group Theory},  13(6)(2010), 779--783.

\bibitem{Cayley} A. Cayley,  A theorem on trees. {\em Quart. J. Math.}, 23 (1889), 376--378.

\bibitem{Chaiken}  S. Chaiken, 
A combinatorial proof of the all minors matrix tree theorem, {\em 
SIAM J. Algebraic Discrete Methods},  3(3) (1982),  319--329. 

\bibitem{Hup} B. Huppert, {\em Endliche Gruppen I}, Springer, Berlin, 1967.

\bibitem{Isaacs} M. I. Isaacs, {\em Finite Group Theory}, Graduate Studies in Mathematics, 92. American Mathematical Society, Providence, RI, 2008.

\bibitem{Kelarev1}  A. V. Kelarev, Graph Algebras and Automata, Marcel Dekker, New York, 2003.


\bibitem{Kelarev2}  A. V.  Kelarev and  S. J. Quinn, A combinatorial property and power graphs of groups, {\em Contrib. General Algebra}, 12 (2000), 229--235.


\bibitem{Kelarev3}  A. V. Kelarev, J. Ryan and J. Yearwood, Cayley graphs as classifiers for data mining: The influence of asymmetries, {\em Discrete Mathematics},  309(17)(2009), 5360--5369.

\bibitem{Mehranian} Z. Mehranian, A. Gholami, and A. R. Ashrafi, A note on the power graph of a finite group, {\em Int. J. Group Theory} 5(1) (2016), 1--10.

\bibitem{Merris} R. Merris, Laplacian graph eigenvectors, {\em Linear Algebra Appl.},  278 (1998),  221--236.

\bibitem{Moghaddamfar}  A. R. Moghaddamfar, S. Rahbariyan, and W. J. Shi, Certain properties of the power graph associated with a finite group, {\em J. Algebra Appl.}, 13(7) (2014),  1450040, 18 pp.

\bibitem{MRSS}  A. R. Moghaddamfar, S. Rahbariyan, S. Navid Salehy and S. Nima Salehy, The number of spanning trees of power graphs associated with specific groups and some applications, {\em Ars Combinatoria}, 113 (2017), 269--296.
\end{thebibliography}
\end{document}